\documentclass{aims}
\usepackage{amsmath}
  \usepackage{paralist}
  \usepackage{graphics} 
  \usepackage{epsfig} 
\usepackage{graphicx}  \usepackage{epstopdf}
 \usepackage[colorlinks=true]{hyperref}

\usepackage{multicol}

\usepackage{geometry}
\usepackage{url}
\usepackage{array}
\usepackage{bm}

\usepackage{layout}

\usepackage{color}
\usepackage{gauss} 
\usepackage{enumerate}   

\usepackage{graphicx}
\usepackage{epsfig}
\usepackage{subfig}
\usepackage{grffile}
\usepackage{tikz}
\usepackage{pgfplots}
\usepackage{pgfplotstable}
\usepackage{cleveref}

\newcommand{\Id}{{\mathsf{Id}}}

\newcommand{\R}{{\mathbb R}}

\newcommand{\trace}{\mathrm{trace}}
\newcommand{\Was}{\mathrm{W}}

\newcommand{\dWas}{\Was_{2}}
\newcommand{\Cb}{\mathbf{C}}
\newcommand{\mb}{\mathbf{m}}
\newcommand{\Gb}{\mathbf{G}}
\newcommand{\bSigma}{\boldsymbol{\Sigma}}

\newcommand{\pset}{\Theta} 
\newcommand{\param}{\theta} 
\newcommand{\norm}[1]{\lVert#1\rVert}

\usepackage{algorithm}
\usepackage{algpseudocode}
\hypersetup{urlcolor=blue, citecolor=red}

  \textheight=8.2 true in
   \textwidth=5.0 true in
    \topmargin 30pt
     \setcounter{page}{1}



\newtheorem{theorem}{Theorem}[section]
\newtheorem{corollary}[theorem]{Corollary}

\newtheorem{lemma}[theorem]{Lemma}

\newtheorem{example}[theorem]{Example}

\theoremstyle{definition}
\newtheorem{definition}[theorem]{Definition}
\newtheorem{remark}[theorem]{Remark}

\title[Certified and fast computations with shallow covariance kernels] 
      {Certified and fast computations with shallow covariance kernels}

\author[D. Kressner, J. Latz, S. Massei, E. Ullmann]{}

\subjclass{62M40  
65R20 
65C20 
65D15  
65G20 
}
 \keywords{Adaptive cross approximation, covariance matrix, greedy algorithm, Wasserstein distance, Gaussian random field.}

 \email{daniel.kressner@epfl.ch}
 \email{jl2160@cam.ac.uk}
 \email{s.massei@tue.nl}
 \email{elisabeth.ullmann@ma.tum.de}

\thanks{Jonas Latz and Elisabeth Ullmann acknowledge the support by Deutsche Forschungsgemeinschaft (DFG) and Technische Universit\"at M\"unchen (TUM) through the TUM International Graduate School of Science and Engineering (IGSSE) within the project 10.02 BAYES.  The work of Stefano Massei has been supported by the SNSF research project \emph{Fast algorithms from low-rank updates}; grant number: 200020\_178806.}

\thanks{$^*$ Corresponding author: Jonas Latz}

\begin{document}
\maketitle

\centerline{\scshape Daniel Kressner}
\medskip
{\footnotesize
 \centerline{MATH-ANCHP, \'Ecole Polytechnique f\'ed\'erale de Lausanne}
   \centerline{1015 Lausanne, Switzerland}
} 

\medskip
\centerline{\scshape Jonas Latz$^*$}
\medskip
{\footnotesize
 \centerline{Department of Applied Mathematics and Theoretical Physics, University of Cambridge}
   \centerline{Cambridge CB3 0WA, United Kingdom}
} 

\medskip
\centerline{\scshape Stefano Massei}
\medskip
{\footnotesize
 \centerline{Department of Mathematics and Computer Science, Eindhoven University of Technology}
   \centerline{5612 AZ Eindhoven, Netherlands}
} 

\medskip

\centerline{\scshape Elisabeth Ullmann}
\medskip
{\footnotesize
 \centerline{ Department of Mathematics, Technical University of Munich}
   \centerline{85748 Garching, Germany}
}

\bigskip

 \centerline{(Communicated by the associate editor name)}

\begin{abstract}
Many techniques for data science and uncertainty quantification demand efficient tools to handle Gaussian random fields, which are defined in terms of their mean functions and covariance operators. 
Recently, parameterized Gaussian random fields have gained increased attention, due to their higher degree of flexibility.
However, especially if the random field is parameterized through its covariance operator, classical random field discretization techniques fail or become inefficient.
In this work we introduce and analyze a new and certified algorithm for the low-rank approximation of a parameterized family of covariance operators which represents an extension of the \emph{adaptive cross approximation}  method for symmetric positive definite matrices. The algorithm relies on an affine linear expansion  of the covariance operator with respect to the parameters, which needs to be computed in a preprocessing step using, e.g., the empirical interpolation method. We discuss and test our new approach for isotropic covariance kernels, such as Mat\'ern kernels. The numerical results demonstrate the advantages of our approach in terms of computational time and confirm that the proposed algorithm provides the basis of a fast sampling procedure for parameter dependent Gaussian random fields.
\end{abstract}

\section{Introduction} \label{Subs_Problem}
Deep neural networks (DNNs) play a fundamental role in modern machine learning and artificial intelligence, see \cite{Higham2019} for an overview. 
They are an example for a \emph{deep model}, which is constructed by composing a number of mathematical models (`layers'). By this construction, deep models allow for much more flexibility in data-driven applications than offered by a standard model.
The \emph{layers} in deep models are often rather simple, like artificial neurons in DNNs or linear functions. 
Hence, the computational cost when \emph{training} a DNN is caused by a large number of layers, not by the complexity of the single models.
In other situations, however, the layers themselves are highly complex. 
Here, already the training of a \emph{shallow model} -- consisting of few layers -- may require an immense computational cost. 

In this article, we consider a specific shallow model with two layers. The outer layer is a  Gaussian process on a compact domain with a covariance kernel containing unknown parameters. 
The inner layer is a probability measure that describes the distribution of the unknown parameters in the covariance kernel. 
We (and, e.g., \cite{garriga-alonso2018deep}) refer to such a shallow model as a \emph{shallow Gaussian process} in contrast to \emph{deep Gaussian processes} that recently gained popularity in the literature; see \cite{Damianou13,Dunlop2018,Emzir2019}. 
Shallow Gaussian processes appear especially in the uncertainty quantification literature, see \cite{Dunlop2017, LATZ2019FastSa, Minden2017,Sraj2016,Wikle2017}, where they are used to model random, function-valued inputs to partial differential equations. Additionally, they are applicable in a variety of statistical or machine learning tasks; see, e.g., \cite{Haug2019}  and Chapter 5 of \cite{Rasmussen2006}.

\begin{figure}[thb]
\centering
\begin{tikzpicture}[scale=0.67]

\draw (-7,0.5) node[anchor = south,solid] {$\bm \param \sim \mu'$};
\draw (-7,-0.5) node[anchor = north,solid] {inner layer};
\draw (1,-0.5) node[anchor = north,solid] {outer layer};

\fill[black] (-7,0) circle (8pt);
\draw[line width=1.5,->] (-7,0) -- (0,0);
\fill[black] (1,0) circle (8pt);
\draw[line width=1.5,->] (1,0) -- (8,0);
\fill[black] (9,0) circle (8pt);

\draw (1,0.5) node[anchor = south,solid] {$X \sim M(\cdot|\bm \param)$};
\draw (9,0.5) node[anchor = south,solid] {$X \sim \mu$};
\draw (9,-0.5) node[anchor = north,solid] {output};

\end{tikzpicture}
\caption{Illustration of a shallow Gaussian process. On the inner layer, a random variable $\bm \param$ with law $\mu'$ is sampled. On the outer layer, the process $X \sim M(\cdot|\bm \param)$ is sampled. Then, $X$ follows the distribution of the shallow Gaussian process  $\mu$.} 
\label{Figure_visual}
\end{figure}
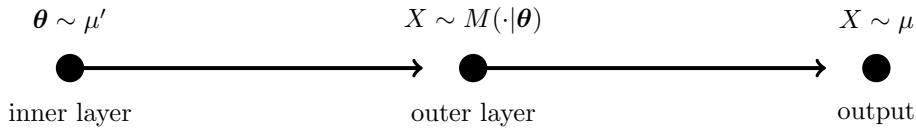

In practice, the Gaussian process on the outer layer is discretized with a very large number of degrees of freedom  $n \in \mathbb{N}$. 
The discretization induces a parameterized Gaussian measure on $(\mathbb{R}^n, \mathcal{B}\mathbb{R}^n)$ of the form
$$
M(\cdot | \bm\param) := \mathrm{N}(\mb(\bm\param), \Cb(\bm\param)) \qquad (\bm\param \in \pset).
$$
Here $\bm\param$ refers to the unknown  parameters that are contained in the parameter space $\pset{}$, which is associated with some $\sigma$-algebra $\mathcal{A}$.
Moreover, $\mb: \pset \rightarrow \mathbb{R}^n$ is a measurable function and
$\Cb: \pset \rightarrow \mathrm{SPSD}(n)$ is a measurable function mapping parameters $\bm\param \in \pset$ to the set of \emph{symmetric positive semidefinite} (SPSD) matrices  in $\mathbb{R}^{n \times n}$.
By this assumption, $M$ is a Markov kernel mapping from $(\pset, \mathcal{A})$ to $(\mathbb{R}^n, \mathcal{B}\mathbb{R}^n)$.

The inner layer is represented by a probability measure $\mu'$ on $(\pset, \mathcal{A})$.
The shallow Gaussian process is then defined as the composition of $\mu'$ and $M$: 
$$\mu := \mu'M := \int_{\pset} M(\cdot| \param) \mu'(\mathrm{d}\bm\param) = \int_{\pset} \mathrm{N}(\mb(\bm\param), \Cb(\bm\param)) \mu'(\mathrm{d}\bm\param) =:\mu'\mathrm{N}(\mb, \Cb);$$
which is a well-defined  probability measure on $(\mathbb{R}^n, \mathcal{B}\mathbb{R}^n)$.
In \Cref{Figure_visual}, we visualize the simple graph structure behind the shallow Gaussian process $\mu$.
We refer to a measure $\mu$ of the described form as a \emph{hierarchical measure}. 

To employ $\mu$ in a statistical or learning task, we have to sample from $M(\cdot | \bm\param)$ for a large number of parameter samples $\bm\param \in \pset$. 
In general, sampling requires a Cholesky factorization or spectral decomposition of $\Cb(\bm\param)$; see, e.g., \cite[p. 1873]{Graham2018}.
We assume here that $\Cb(\bm\param)$ is a dense matrix.
Then, sampling from $M(\cdot|\bm\param)$ in the standard way comes at a cost of $\mathcal{O}(n^3)$ operations for each $\bm\param \in \pset$.
Since $n$ is assumed to be very large, performing these tasks  for many $\bm\param \in \pset$ becomes computationally infeasible.

For fixed $\bm\param$, applying \emph{adaptive cross approximation (ACA)} to $\Cb(\bm\param)$ has proven effective in accelerating the computation of a (partial) Cholesky factorization \cite{harbrecht,Harbrechtetal:2015,Schaefer2017}. This is closely related to the Nystr\"om method \cite{williams2001using} which also builds approximations of a covariance matrix using a subset of its columns. 
Other than ACA, techniques employed for the fast generation of random fields are based on, e.g., circulant embedding  \cite{Bachmayr2019,Dietrich1997,Graham2018}, H-matrices \cite{Feischl2017,Khoromskij2009,Minden2017},   fractional stochastic partial differential equations \cite{Khrist2019,Lindgren2011}, or  fast computations of truncated Karhunen-Lo\`eve expansions  \cite{Rizzi2018,Saibabaetal:2016,SchwabTodor:2006}.
 
The main contribution of this article is a new variant, \emph{parameter-dependent ACA}, that aims at approximating $\Cb( \cdot )$ simultaneously  across the whole parameter range. 
For this purpose, we replace $\Cb(\cdot)$ by some low-rank approximation $\widehat{\Cb}(\cdot)$ for which sampling can be done computationally fast, even if $n$ is large. We define the corresponding \emph{approximate outer layer Markov kernel} $$\widehat{M}(\cdot|\bm\param) := \mathrm{N}(\mb(\bm\param), \widehat{\Cb}(\bm\param)) \qquad (\bm\param \in \pset)$$ and shallow Gaussian process $\widehat{\mu} := \mu'\widehat{M}$.
In our method, $\widehat{\Cb}(\cdot)$ is constructed by a greedy procedure that controls the errors in the \emph{Wasserstein} distance $W_2$. 
To be more precise, we aim at an approximation that is accurate in terms of the errors
\begin{equation} \label{Eq_Was_intro}
 \dWas\left(\mu, \widehat{\mu}\right), \qquad \qquad \dWas\left(M(\cdot|\bm\param), \widehat{M}(\cdot|\bm\param)\right) \quad (\bm\param \in \pset).
\end{equation}
This is a significant improvement compared to the heuristic algorithm proposed by Latz et al. \cite{LATZ2019FastSa}.
Approximation methods that aim at controlling the approximation error are sometimes called \emph{certified}. Further contributions of this work are the following:
\begin{itemize}
\item We propose and discuss techniques to approximate parameterized covariance matrices by linearly separable expansions, as required by ACA and other low-rank techniques,
\item We analyze the robustness with respect to loss of positive semi-definiteness of the new parameter-dependent ACA and we demonstrate that it has linear cost $\mathcal{O}(n)$,
\item We show computational experiments in which the techniques are used to construct low-rank approximations of matrices generated from Mat\'ern covariance kernels; see, e.g., \cite[Subsection 2.10]{stein}.
\end{itemize}

This work is organized as follows. In \Cref{Sec_erroranaly}, we compute upper bounds for the Wasserstein distances in  \cref{Eq_Was_intro}. 
These will be used for error control in the parameter-dependent ACA that we propose in \Cref{Sec_aca}. 
In \Cref{Sec_IsotrCKer}, we give examples for isotropic covariance kernels and discuss approximation strategies that lead to linearly separable covariance kernels.
Finally, we verify our theoretical findings in numerical experiments in \Cref{Sec_NumExp} and conclude the work in \Cref{Sec_Concl}.

\section{Error analysis} \label{Sec_erroranaly}

As discussed in \Cref{Subs_Problem}, the approximations $\widehat{\mu}, \widehat{M}$ of $\mu, M$ are obtained by replacing the parameterized covariance matrices by low-rank approximations.  In this section, we quantify the error that is introduced by such an approximation. 

Various distances for measures have been proposed, such as the total variation distance, the Hellinger distance, the Prokhorov metric, the Kullback--Leibler divergence, and the Wasserstein distance; we refer to 
Gibbs and Su \cite{Gibbs2002} for an overview. 
Let $\bm\param \in \pset$.
If $\widehat{\Cb}(\bm\param)$ is a low-rank approximation of ${\Cb}(\bm\param)$, typically $\mathrm{img}({{\Cb}(\bm\param)^{1/2}}) \neq \mathrm{img}({\widehat{\Cb}(\bm\param)^{1/2}})$.
In this case, $M(\cdot|\bm\param)$ and $\widehat{M}(\cdot|\bm\param)$ are singular measures, which makes the total variation distance and equivalent metrics unsuitable: for singular measures the total variation distance is always equal to 2.
The Wasserstein distance $W_p$, however, is suitable, as it is closely related to weak convergence; see \cite[Theorem 6.9]{Villani2009}.
\begin{definition}[Wasserstein]
Let $\nu, \widehat{\nu}$ be two probability measures on $(\mathbb{R}^n, \mathcal{B}\mathbb{R}^n)$. We define $\mathrm{Coup}(\nu,\widehat{\nu})$ to be the \emph{set of couplings} of $\nu, \widehat{\nu}$, i.e. 
 the set of probability measures $H$ on $(\mathbb{R}^{2n}, \mathcal{B}\mathbb{R}^{2n})$, such that 
$
 H(B \times \R^n ) = {\nu}(B)$, $H(\R^n \times B) = \widehat{\nu}(B)$ $(B \in \mathcal{B}\R^n).
$
Given $p \in [1, \infty)$, the \emph{Wasserstein(--$p$) distance} between $\nu$ and $\widehat{\nu}$ is defined by
$$
\Was_{p}(\nu, \widehat{\nu}) := \left(\inf_{H' \in \mathrm{Coup}(\nu,\widehat{\nu})} \int_{\mathbb{R}^{2n}} \|X_1 - X_2 \|_p^p H'(\mathrm{d}X_1, \mathrm{d}X_2)\right)^{1/p},
$$
provided that this integral is well-defined.
\end{definition}
The Wasserstein distance $W_p$ is motivated by the idea of optimal transport. It is defined as the cost of an optimal transport between the two probability measures, measured in the $p$-norm.
The distance is well-defined if the $p$th absolute moment of both  measures is finite.
This is for instance the case for measures with light tails, such as the Gaussian measures that we encounter throughout \Cref{Subsec_fixed_param}.
For more details on the Wasserstein distance, we refer to the book by Villani \cite{Villani2009}.

Throughout the rest of this paper, we assume, without loss of generality that the parameterized Gaussian measures have mean zero, i.e., $\mb \equiv 0$.

\subsection{The fixed-parameter case} \label{Subsec_fixed_param} We commence with the analysis of the Wasserstein distance $W_2$ for two Gaussian measures
\begin{align*}
\kappa := \mathrm{N}(0,C) := \mathrm{N}(0,\Cb(\bm\param)) := M(\cdot|\bm\param), \qquad 
\widehat{\kappa} := \mathrm{N}(0,\widehat{C}) := \mathrm{N}(0,\widehat{\Cb}(\bm\param)) := \widehat{M}(\cdot|\bm\param),
\end{align*}
for \emph{fixed} $\bm\param \in \pset.$
A well-known result from~\cite{Gelbrich1990} states that
\begin{equation} \label{eq:gelbrich}
 \dWas(\kappa, \widehat{\kappa})^2 = \trace(C+\widehat{C}-2(C^{1/2}\widehat{C}C^{1/2})^{1/2}).
\end{equation}

Throughout the remainder of this section, we assume that $C - \widehat{C}$ is SPSD. Note that this assumption can be written as $\widehat{C} \preceq C$, where $\preceq$ denotes the \emph{L\"owner order} on $\mathrm{SPSD}(n)$.
If this assumption holds,  we can bound the Wasserstein distance $W_2$ between the measures $\kappa$ and $\widehat{\kappa}$ in terms of the trace of $C- \widehat{C}$.
\begin{lemma} \label{lemma_Wasserstein}
Let ${\kappa}:= \mathrm{N}(0, {C})$ and $\widehat{\kappa}:= \mathrm{N}(0, \widehat{C})$ be Gaussian measures on $\mathbb{R}^n$ and let $C- \widehat{C}$ be SPSD. Then, 
$
\dWas(\kappa, \widehat{\kappa}) \leq \trace(C - \widehat{C})^{1/2}.
$
\end{lemma}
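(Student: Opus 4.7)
The plan is to work directly from the Gelbrich identity \eqref{eq:gelbrich} and reduce the claimed inequality $\dWas(\kappa,\widehat\kappa)^2 \leq \trace(C-\widehat C)$ to a single trace comparison. Subtracting $\trace(C-\widehat C)$ from the right-hand side of \eqref{eq:gelbrich}, the statement becomes equivalent to
\[
\trace(\widehat C) \leq \trace\bigl((C^{1/2}\widehat C C^{1/2})^{1/2}\bigr),
\]
so the whole task is to prove this one eigenvalue-sum inequality under the assumption $\widehat C \preceq C$.

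The key observation I would use is that the SPSD matrix $C^{1/2}\widehat C C^{1/2}$ is similar (via the usual $XY$ vs.\ $YX$ trick applied to $X=\widehat C^{1/2}$, $Y=\widehat C^{1/2}C$) to $\widehat C^{1/2} C \widehat C^{1/2}$, so the two share the same (nonnegative) spectrum. Hence one can replace $C^{1/2}\widehat C C^{1/2}$ by $\widehat C^{1/2} C \widehat C^{1/2}$ inside the trace and square root without changing anything. This is the step that converts a statement about $C$ conjugating $\widehat C$ into one about $\widehat C$ conjugating $C$, which is what makes the L\"owner hypothesis usable.

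Now the hypothesis $\widehat C \preceq C$ gives $\widehat C^{1/2} C \widehat C^{1/2} \succeq \widehat C^{1/2}\widehat C\widehat C^{1/2} = \widehat C^2$ by conjugation monotonicity of the L\"owner order. By Weyl's monotonicity principle, the ordered eigenvalues satisfy $\lambda_i(\widehat C^{1/2}C\widehat C^{1/2}) \geq \lambda_i(\widehat C^2) = \lambda_i(\widehat C)^2$ for every $i$. Taking square roots (all quantities are nonnegative) and summing over $i$ yields $\trace\bigl((\widehat C^{1/2} C \widehat C^{1/2})^{1/2}\bigr) \geq \trace(\widehat C)$, which is precisely the reduction above. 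Plugging back into \eqref{eq:gelbrich} and taking a square root finishes the proof.

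The main obstacle is really conceptual rather than computational: one must spot that $C^{1/2}\widehat C C^{1/2}$ should be replaced by the isospectral $\widehat C^{1/2} C \widehat C^{1/2}$ before applying monotonicity, since only the latter form admits the sandwich $\succeq \widehat C^2$ that squares to a clean eigenvalue comparison. Everything else — the reduction from the Gelbrich formula, the appeal to Weyl monotonicity, and the eigenvalue-by-eigenvalue square-rooting — is routine.
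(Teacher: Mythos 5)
Your proof is correct, but it takes a genuinely different route from the paper. The paper never invokes the Gelbrich identity for this lemma; instead it exhibits an explicit coupling, namely the Gaussian on $\mathbb{R}^{2n}$ with block covariance $\begin{pmatrix} C & \widehat C\\ \widehat C & \widehat C\end{pmatrix}$, checks that this block matrix is SPSD by writing it as $\begin{pmatrix}\widehat C^{1/2}\\ \widehat C^{1/2}\end{pmatrix}\begin{pmatrix}\widehat C^{1/2}\\ \widehat C^{1/2}\end{pmatrix}^T + \begin{pmatrix} C-\widehat C & 0\\ 0&0\end{pmatrix}$, and evaluates the transport cost of that coupling as $\trace(C+\widehat C - 2\widehat C)$. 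You instead start from the exact formula \eqref{eq:gelbrich}, reduce the claim to $\trace(\widehat C)\le \trace\bigl((C^{1/2}\widehat C C^{1/2})^{1/2}\bigr)$, pass to the isospectral matrix $\widehat C^{1/2} C\widehat C^{1/2}$ via the $MM^T$ versus $M^TM$ argument, sandwich it below by $\widehat C^2$ using conjugation monotonicity of $\preceq$, and finish with Weyl monotonicity and termwise square roots; every step is valid. The trade-off is worth noting: your argument is a self-contained matrix inequality that leans on the exact Gaussian formula (and, read in reverse, essentially explains when the bound is tight, cf.\ \Cref{cor_Wassersteinbound_accur}), whereas the paper's coupling construction is more elementary and, more importantly, is exactly the device that survives the passage to the hierarchical case in \Cref{theorem_Wasserstein_param}, where no closed-form expression for $\dWas$ of the mixture is available. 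So your proof covers the lemma as stated but would not transfer as directly to the parameter-dependent setting the paper needs next.
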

\begin{proof}
The set of couplings $\mathrm{Coup}(\kappa,\widehat{\kappa})$ contains the following set of Gaussian measures on $\R^{2n}$:
$$E:=\left\lbrace \mathrm{N}\left(0,  G \right) : G = \begin{pmatrix}
C & \Sigma \\
\Sigma^T & \widehat{C}
\end{pmatrix},  \Sigma \in \mathbb{R}^{n \times n}, G \in \mathrm{SPSD}(2n) \right\rbrace  \subseteq \mathrm{Coup}(\kappa,\widehat{\kappa}).$$
Let $H' \in E$. Then, by definition of $\dWas$,
\begin{align*}
\dWas(\kappa, \widehat{\kappa})^2 &\leq  \int \|X_1 - X_2 \|_2^2 H'(\mathrm{d}X_1, \mathrm{d}X_2) \\
&= \int \sum_{i=1}^n  \left(X_1^{(i)} - X_2^{(i)}\right)^2  H'(\mathrm{d}X_1, \mathrm{d}X_2) \\
&=  \int \sum_{i=1}^n  \left(X_1^{(i)}\right)^2 + \left(X_2^{(i)}\right)^2 - 2\left(X_1^{(i)}X_2^{(i)}\right) H'(\mathrm{d}X_1, \mathrm{d}X_2) \\
&=  \sum_{i=1}^n  C_{ii} + \widehat{C}_{ii} - 2\Sigma_{ii} = \trace(C + \widehat{C} - 2\Sigma),
\end{align*}
where $X_j = (X_j^{(1)},\ldots, X_j^{(n)})$, $j = 1,2$.
The result of the lemma now follows from choosing $\Sigma=\widehat{C}$, provided that this choice leads to a measure in $E$, that is, the matrix
$$
G = \begin{pmatrix}
C & \widehat{C} \\
\widehat{C} & \widehat{C}
\end{pmatrix}
$$
is SPSD. On the other hand, this follows immediately from the fact that $G$ can be written as a sum of two SPSD matrices:
\[
 G(\widehat C) = \begin{pmatrix}
\widehat C^{1/2} \\
\widehat C^{1/2}
\end{pmatrix}\begin{pmatrix}
\widehat C^{1/2} \\
\widehat C^{1/2}
\end{pmatrix}^T + \begin{pmatrix}
C-\widehat{C} & 0 \\
0 & 0
\end{pmatrix},
\]
where $C-\widehat{C}$ is SPSD by assumption.
%
%
\end{proof}
Using the exact formula~\eqref{eq:gelbrich} for the Wasserstein distance, the following example verifies the tightness of the bound in \Cref{lemma_Wasserstein}.
\begin{example} \label{ex:Wassersteinbounds}
Let $n = 100$. In two settings, we consider a measure $\kappa := \mathrm{N}(0, C)$ and a sequence of measures $\widehat{\kappa}_i = \mathrm{N}(0, \widehat{C}^i)$, $i = 1,...,100$. 
\begin{itemize}
\item[(i)] We consider $C := \Id_{100}$ to be the identity matrix in $\mathbb{R}^{100 \times 100}$ and $\widehat{C}^i$ to be the diagonal matrix with $\widehat{C}^i_{jj} = 1$, if $j \leq i$, and $\widehat{C}^i_{jj}=0$, otherwise.
\item[(ii)] We consider $C := \left(\exp(-(j-k)^2)\right)_{1\leq j,k \leq 100}$  a discretization of the 1D Gaussian covariance kernel and $\widehat{C}^i := (i/100)\cdot C$.
\end{itemize}
\Cref{fig_Wassersteinbound} shows $\dWas(\kappa,\widehat{\kappa}_i)$ and its upper bound in \Cref{lemma_Wasserstein}.
\end{example}
\begin{figure}
\centering
\includegraphics[scale=0.7]{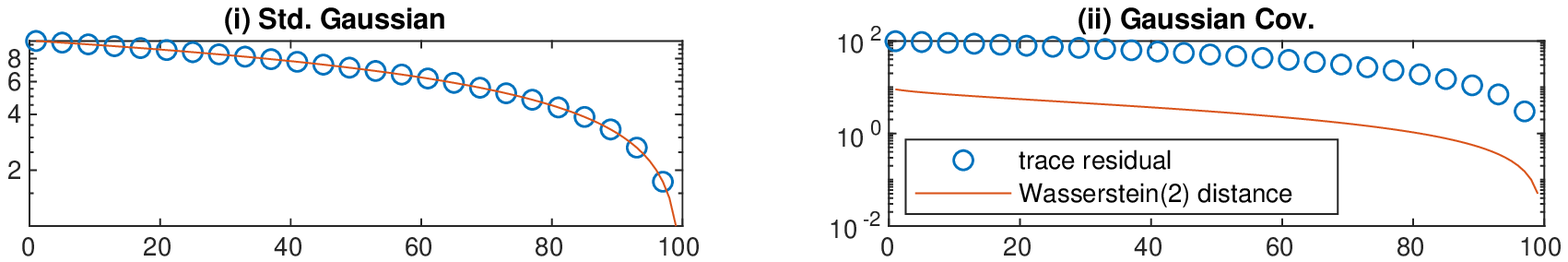}
\caption{Wasserstein distance $\dWas(\kappa,\widehat{\kappa}_i)$ and trace residual $\trace(C-\widehat{C}^i)^{1/2}$ vs. $i$ for the Gaussian measures considered in  \Cref{ex:Wassersteinbounds} (i)-(ii).}
\label{fig_Wassersteinbound}
\end{figure}
We observe that for \Cref{ex:Wassersteinbounds}(i), the bound from \Cref{lemma_Wasserstein} coincides with the Wasserstein distance. 
This is not a coincidence, since one can show the following corollary.
\begin{corollary}\label{cor_Wassersteinbound_accur}
Let ${\kappa}:= \mathrm{N}(0, {C})$ and $\widehat{\kappa}:= \mathrm{N}(0, \widehat{C})$, where $C- \widehat{C}$ is SPSD. Additionally, suppose that $C, \widehat{C}$ commute and $C^{1/2} \widehat{C}^{1/2} = \widehat{C}$. Then,
$
\dWas(\kappa, \widehat{\kappa}) =  \trace(C - \widehat{C})^{1/2}.
$
\end{corollary}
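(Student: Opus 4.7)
The plan is to apply the Gelbrich formula \eqref{eq:gelbrich} directly and show that, under the stated hypotheses, the cross term $(C^{1/2}\widehat{C}C^{1/2})^{1/2}$ collapses to $\widehat{C}$. If this holds, then
\[
\dWas(\kappa,\widehat{\kappa})^2 = \trace(C+\widehat{C}-2\widehat{C}) = \trace(C-\widehat{C}),
\]
which is precisely the desired identity.

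The key computation reduces to a manipulation of operator square roots. Since $C$ and $\widehat{C}$ commute and both are SPSD, $C^{1/2}$ and $\widehat{C}^{1/2}$ also commute (they are polynomials in $C$ and $\widehat{C}$ via simultaneous diagonalization). Consequently,
\[
C^{1/2}\widehat{C}C^{1/2} = C^{1/2}\widehat{C}^{1/2}\widehat{C}^{1/2}C^{1/2} = \bigl(C^{1/2}\widehat{C}^{1/2}\bigr)^2.
\]
Using the hypothesis $C^{1/2}\widehat{C}^{1/2} = \widehat{C}$, this equals $\widehat{C}^2$, and taking the unique SPSD square root yields $(C^{1/2}\widehat{C}C^{1/2})^{1/2} = \widehat{C}$.

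The only delicate point is justifying that the SPSD square root of $\widehat{C}^2$ is $\widehat{C}$ itself; this is standard because $\widehat{C}$ is SPSD and the positive square root of an SPSD matrix is unique. Everything else is bookkeeping with traces. I would organize the write-up as: (i) invoke \eqref{eq:gelbrich}; (ii) use commutativity to factor $C^{1/2}\widehat{C}C^{1/2}$ as a square; (iii) apply the assumption $C^{1/2}\widehat{C}^{1/2}=\widehat{C}$; (iv) take the SPSD square root; (v) simplify the trace. No serious obstacle is anticipated, since the SPSD assumption $C-\widehat{C}\succeq 0$ only enters implicitly through ensuring the setup is consistent, and the algebraic hypotheses on commutativity and $C^{1/2}\widehat{C}^{1/2}=\widehat{C}$ do all the work.
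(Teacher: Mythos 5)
Your proposal is correct and follows essentially the same route as the paper's proof: invoke the Gelbrich formula, use commutativity of the square roots to write $C^{1/2}\widehat{C}C^{1/2}=(C^{1/2}\widehat{C}^{1/2})^2=\widehat{C}^2$, and take the unique SPSD square root to collapse the cross term to $\widehat{C}$. Your explicit remark on the uniqueness of the SPSD square root is a detail the paper leaves implicit, but the argument is identical in substance.
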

\begin{proof}
Since, $C$ and $\widehat{C}$ commute, their square-roots commute as well, and hence it follows from~\eqref{eq:gelbrich} that
\begin{align*}
\trace(C+\widehat{C}-2(C^{1/2}\widehat{C}C^{1/2})^{1/2}) &= \trace(C+\widehat{C}-2(C^{1/2}\widehat{C}^{1/2}C^{1/2}\widehat{C}^{1/2})^{1/2}) \\ &= \trace(C - \widehat{C}),
\end{align*}
using $C^{1/2} \widehat{C}^{1/2} = \widehat{C}$. Taking the square-root on both sides proves our claim.
\end{proof}
\begin{remark}\label{remark_KL_Was}
The assumptions of \Cref{cor_Wassersteinbound_accur} are for instance satisfied if $\widehat{C}$ is given via a truncation of the spectral decomposition of $C$. Such a low-rank approximation is given when employing a principal component analysis (PCA) or Karhunen-Lo\`eve (KL) expansion of the Gaussian random vectors with covariance $C$. In particular, let $V\mathrm{diag}(e_1,...,e_n) V^{-1} = C$ be the spectral decomposition of $C$ and $\widehat{C} := V\mathrm{diag}(\widehat{e}_1,...,\widehat{e}_n) V^{-1}$, where some of the eigenvalues of $C$ are replaced by zeros, i.e. $\widehat{e}_i \in \{e_i, 0\}$, $i = 1,...,n$.
The  Wasserstein distance between mean-zero Gaussian measures with these two covariance matrices is then given by $$W_2\left(\mathrm{N}(0,C), \mathrm{N}(0,\widehat{C})\right) := \left(\sum_{i=1}^n \mathbf{1}[\widehat{e}_i = 0] e_i\right)^{1/2}.$$ Notably, this is identical to the $L^2$ error obtained from dropping terms in the PCA.
\end{remark}

For general low-rank approximations $\widehat{C}$ of $C$, we cannot expect the assumptions of  \Cref{cor_Wassersteinbound_accur} to hold. Clearly, in \Cref{ex:Wassersteinbounds} (ii), the bound is not exact. However, it shows the same qualitative convergence behavior.


\subsection{The random-parameter case}
%
We now aim to generalize  \Cref{lemma_Wasserstein} to the hierarchical measures $\mu := \mu'\mathrm{N}(0,\Cb)$ and $\widehat{\mu} := \mu'\mathrm{N}(0, \widehat{\Cb})$.

\begin{theorem} \label{theorem_Wasserstein_param}
Let $\mu'$ be a probability measure on $\pset$, ${\mu}:= \mu'\mathrm{N}(0, {\Cb})$ and $\widehat{\mu}:= \mu'\mathrm{N}(0, \widehat{\Cb})$, where $\Cb(\bm\param)- \widehat{\Cb}(\bm\param)$ is SPSD for $\mu'$-a.e. $\bm\param \in \pset$. Moreover, we assume that $\int \|X\|^2_2\mu(\mathrm{d}X) < \infty.$ 
Then, 
\begin{equation}
\dWas(\mu, \widehat{\mu}) \leq \left(\int_{\pset} \trace(\Cb(\bm\param) - \widehat{\Cb}(\bm\param))\mu'(\mathrm{d}\bm\param)\right)^{1/2}. \label{eq_Wass_param}
\end{equation}
\end{theorem}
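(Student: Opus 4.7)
The plan is to lift the fixed-parameter coupling used in the proof of Lemma~\ref{lemma_Wasserstein} to the hierarchical setting via disintegration. For each $\bm\param\in\pset$, that proof produces a Gaussian coupling $H_{\bm\param} = \mathrm{N}(0, G(\widehat{\Cb}(\bm\param)))$ on $\R^{2n}$ of $M(\cdot\mid\bm\param)$ and $\widehat{M}(\cdot\mid\bm\param)$ whose quadratic cost equals $\trace(\Cb(\bm\param) - \widehat{\Cb}(\bm\param))$. I would then integrate these conditional couplings against $\mu'$ to obtain a single coupling $H$ of $\mu$ and $\widehat{\mu}$, and bound the Wasserstein distance by its cost.

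More concretely, first I would define the Markov kernel
\[
H_{\bm\param}(\mathrm{d}X_1,\mathrm{d}X_2) := \mathrm{N}\!\left(0, G(\widehat{\Cb}(\bm\param))\right)(\mathrm{d}X_1,\mathrm{d}X_2),
\]
and verify that $\bm\param \mapsto H_{\bm\param}(B)$ is $\mathcal{A}$-measurable for every $B \in \mathcal{B}\R^{2n}$; this is routine, because $\bm\param \mapsto \Cb(\bm\param)$ and $\bm\param \mapsto \widehat{\Cb}(\bm\param)$ are measurable by assumption, so $\bm\param \mapsto G(\widehat{\Cb}(\bm\param))$ is measurable, and Gaussian measures depend measurably on their covariance. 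I then set $H(\mathrm{d}X_1,\mathrm{d}X_2) := \int_{\pset} H_{\bm\param}(\mathrm{d}X_1,\mathrm{d}X_2)\, \mu'(\mathrm{d}\bm\param)$. Checking the marginal conditions is immediate from the defining property of $G(\widehat{\Cb}(\bm\param))$ in the proof of Lemma~\ref{lemma_Wasserstein}: for any $B \in \mathcal{B}\R^n$,
\[
H(B \times \R^n) = \int_{\pset} M(B \mid \bm\param)\, \mu'(\mathrm{d}\bm\param) = \mu(B),
\]
and analogously $H(\R^n \times B) = \widehat{\mu}(B)$, so $H \in \mathrm{Coup}(\mu,\widehat{\mu})$.

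The computation then proceeds as in Lemma~\ref{lemma_Wasserstein}, but with a Fubini step:
\begin{align*}
\dWas(\mu,\widehat{\mu})^2
&\leq \int_{\R^{2n}} \|X_1-X_2\|_2^2\, H(\mathrm{d}X_1,\mathrm{d}X_2) \\
&= \int_{\pset} \int_{\R^{2n}} \|X_1-X_2\|_2^2\, H_{\bm\param}(\mathrm{d}X_1,\mathrm{d}X_2)\, \mu'(\mathrm{d}\bm\param) \\
&= \int_{\pset} \trace\!\left(\Cb(\bm\param) - \widehat{\Cb}(\bm\param)\right) \mu'(\mathrm{d}\bm\param),
\end{align*}
where the last equality reuses the inner-integral computation from the proof of Lemma~\ref{lemma_Wasserstein}. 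Taking square roots yields~\eqref{eq_Wass_param}, and Jensen's inequality on $\sqrt{\cdot}$ is not needed since the square-root is taken at the very end.

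The only non-routine point is justifying the use of Fubini and the measurable selection that produces a bona fide probability measure $H$ on $\R^{2n}$: this requires that $\bm\param \mapsto H_{\bm\param}$ is a Markov kernel from $(\pset,\mathcal{A})$ to $(\R^{2n},\mathcal{B}\R^{2n})$, and that the integrand $\|X_1-X_2\|_2^2$ is non-negative (so Tonelli applies without integrability concerns). The assumption $\int \|X\|_2^2 \mu(\mathrm{d}X) < \infty$ in the hypothesis guarantees that the $W_2$ distance is well-defined, and combined with $\widehat{\Cb}(\bm\param) \preceq \Cb(\bm\param)$ it also ensures $\int \|X\|_2^2 \widehat{\mu}(\mathrm{d}X) < \infty$, so both sides of the inequality are finite whenever the right-hand side is.
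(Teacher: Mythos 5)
Your proof is correct and follows essentially the same route as the paper: both construct the mixture coupling $\mu'\mathrm{N}(0,\Gb(\cdot))$ with off-diagonal block $\bSigma(\bm\param)=\widehat{\Cb}(\bm\param)$ (whose positive semidefiniteness is exactly the content of Lemma~\ref{lemma_Wasserstein}) and bound $\dWas(\mu,\widehat{\mu})^2$ by its quadratic cost $\int_{\pset}\trace(\Cb(\bm\param)-\widehat{\Cb}(\bm\param))\,\mu'(\mathrm{d}\bm\param)$. Your appeal to Tonelli on the non-negative integrand $\|X_1-X_2\|_2^2$ is a marginally cleaner justification of the interchange than the paper's term-by-term splitting, but the argument is the same.
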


\begin{proof}
We proceed similarly as in the proof of  \Cref{lemma_Wasserstein}.
First, we define $E$ to be the set
\begin{align*}
\left\lbrace \mu'\mathrm{N}\left(0,  \Gb(\cdot) \right) : \Gb(\bm\param) = \begin{pmatrix}
\Cb(\bm\param) & \bSigma(\bm\param) \\
\bSigma(\bm\param)^T & \widehat{\Cb}(\bm\param)
\end{pmatrix} \in \mathrm{SPSD}(2n), \right. \\ \qquad \qquad \qquad \qquad  \left. \vphantom{\begin{pmatrix}
\Cb(\bm\param) & \bSigma(\bm\param) \\
\bSigma(\bm\param)^T & \widehat{\Cb}(\bm\param)
\end{pmatrix} } \bSigma(\bm\param) \in \mathbb{R}^{n \times n} \ ( \bm\param \in \pset, \mu'\text{-a.s.})  \right\rbrace.
\end{align*}
One can show that $E \subseteq \mathrm{Coup}(\mu,\widehat{\mu})$.
Let $H' \in E$. Then,
\begin{align}
\dWas(\mu, \widehat{\mu})^2 &\leq  \int \|X_1 - X_2 \|_2^2 H'(\mathrm{d}X_1, \mathrm{d}X_2) \nonumber = \int \sum_{i=1}^n  \left(X_1^{(i)} - X_2^{(i)}\right)^2  H'(\mathrm{d}X_1, \mathrm{d}X_2)  \nonumber\\
&=  \int \sum_{i=1}^n  \left(X_1^{(i)}\right)^2 + \left(X_2^{(i)}\right)^2 - 2\left(X_1^{(i)}X_2^{(i)}\right) H'(\mathrm{d}X_1, \mathrm{d}X_2). \label{eq:proofthmwasserstein}
\end{align}
By assumption, 
$\int \|X\|^2_2\mu(\mathrm{d}X) < \infty.$ Therefore, 
\begin{align*}
&\int \left(X^{(i)}\right)^2 \mu(\mathrm{d}X) < \infty,\quad \int \left(X^{(i)}\right)^2 \widehat{\mu}(\mathrm{d}X)<\infty, \\ &\Big|\int \left(X_1^{(i)}X_2^{(i)}\right) H'(\mathrm{d}X_1, \mathrm{d}X_2)\Big| < \infty,
\end{align*}
where the second moment of $\widehat{\mu}$ is finite, since $\Cb(\bm\param) - \widehat{\Cb}(\bm\param)$ is almost surely SPSD.
Hence, the integral in \cref{eq:proofthmwasserstein} is finite, and 
\begin{align*}
\text{\eqref{eq:proofthmwasserstein}} &=  \iint \sum_{i=1}^n  \left(X_1^{(i)}\right)^2 + \left(X_2^{(i)}\right)^2 - 2\left(X_1^{(i)}X_2^{(i)}\right) \mathrm{N}(0,\Gb(\bm\param))(\mathrm{d}X_1, \mathrm{d}X_2)\mu'(\mathrm{d}\bm\param) \\ &=  \int\sum_{i=1}^n  \Cb(\bm\param)_{ii} + \widehat{\Cb}(\bm\param)_{ii} - 2\bSigma(\bm\param)_{ii} \mu'(\mathrm{d}\bm\param) \\
&= \int \trace(\Cb(\bm\param) + \widehat{\Cb}(\bm\param) - 2\bSigma(\bm\param))\mu'(\mathrm{d}\bm\param),
\end{align*}
where $\bSigma(\bm\param)$ is chosen such that $\Gb(\bm\param)$ is SPSD, $\bm\param \in \pset$, $\mu'$-a.s.
Analogously to the proof of \Cref{lemma_Wasserstein}, by setting $\bSigma(\bm\param) \equiv \widehat{\Cb}(\bm\param)$ we have  that $\mu'$-a.s. $\Gb(\bm\param)$ is SPSD.
This yields the upper bound claimed in the theorem.
\end{proof}
In practice, we cannot compute the integral on the right-hand side of \cref{eq_Wass_param}.
A straight-forward alternative is a Monte Carlo approximation with samples from $\mu'$. We derive a Monte-Carlo-based bound in the corollary below and quantify its reliability.
\begin{corollary} \label{cor:MonteCarlo}
Let  $\bm\param_1,...,\bm\param_M   \sim \mu'$ be independent and identically distributed (i.i.d.)  and $\mathrm{Var}\left( \trace(\Cb(\bm\param_1) - \widehat{\Cb}(\bm\param_1)) \right) < \infty$. Then, for all $\varepsilon > 0$, we have
$$\dWas(\mu, \widehat{\mu})^2  \leq \frac{1}{M}\sum_{m=1}^M \trace(\Cb(\bm\param_m) - \widehat{\Cb}(\bm\param_m)) + \varepsilon$$
with probability  not less than $1 - \mathrm{Var} (\trace(\Cb(\bm\param_1) - \widehat{\Cb}(\bm\param_1)))/{(M \varepsilon^2)}.$
\end{corollary}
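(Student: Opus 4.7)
The plan is to combine Theorem \ref{theorem_Wasserstein_param} with a straightforward concentration argument for the sample mean of i.i.d.\ random variables. First, I would set $Y_m := \trace(\Cb(\bm\param_m) - \widehat{\Cb}(\bm\param_m))$ for $m = 1,\dots,M$. By the i.i.d.\ assumption on the $\bm\param_m$, the $Y_m$ are i.i.d.\ as well, with common expectation
\[
\mathbb{E}[Y_1] = \int_{\pset} \trace(\Cb(\bm\param) - \widehat{\Cb}(\bm\param))\,\mu'(\mathrm{d}\bm\param),
\]
and common variance $\mathrm{Var}(Y_1) < \infty$ by assumption. Note that $Y_1 \geq 0$ almost surely since $\Cb(\bm\param) - \widehat{\Cb}(\bm\param)$ is $\mu'$-a.s.\ SPSD, so in particular the expectation is well defined (possibly $+\infty$, but finiteness of the variance forces it to be finite).

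Next, I would apply Theorem \ref{theorem_Wasserstein_param} to get the deterministic bound
\[
\dWas(\mu,\widehat{\mu})^2 \leq \mathbb{E}[Y_1],
\]
and then Chebyshev's inequality to the sample mean $\bar{Y}_M := \frac{1}{M}\sum_{m=1}^M Y_m$, which has mean $\mathbb{E}[Y_1]$ and variance $\mathrm{Var}(Y_1)/M$. Chebyshev's inequality yields, for any $\varepsilon > 0$,
\[
\mathbb{P}\bigl(\,\mathbb{E}[Y_1] - \bar{Y}_M \geq \varepsilon\,\bigr) \leq \mathbb{P}\bigl(\,|\bar{Y}_M - \mathbb{E}[Y_1]| \geq \varepsilon\,\bigr) \leq \frac{\mathrm{Var}(Y_1)}{M\varepsilon^2}.
\]
Equivalently, the event $\{\mathbb{E}[Y_1] \leq \bar{Y}_M + \varepsilon\}$ has probability at least $1 - \mathrm{Var}(Y_1)/(M\varepsilon^2)$. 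Chaining this with the Wasserstein bound gives the claim.

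There is essentially no hard step: the only thing worth checking is that one really needs only a one-sided deviation bound (underestimation of $\mathbb{E}[Y_1]$ by $\bar{Y}_M$), which is automatically provided by the two-sided Chebyshev estimate. If one wanted a sharper constant, the one-sided Chebyshev (Cantelli) inequality could be used, replacing $\mathrm{Var}(Y_1)/(M\varepsilon^2)$ by $\mathrm{Var}(Y_1)/(\mathrm{Var}(Y_1) + M\varepsilon^2)$, but the statement is formulated with the standard Chebyshev constant, so no refinement is needed.
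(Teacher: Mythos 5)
Your proposal is correct and follows essentially the same route as the paper: define $\bm\xi_m := \trace(\Cb(\bm\param_m) - \widehat{\Cb}(\bm\param_m))$, invoke \Cref{theorem_Wasserstein_param} to get $\dWas(\mu,\widehat{\mu})^2 \leq \mathbb{E}[\bm\xi_1]$, and apply Chebyshev's inequality to the sample mean. The remark about the one-sided bound being subsumed by the two-sided Chebyshev estimate matches the chain of inequalities in the paper's proof.
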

\begin{proof}
Let $\bm\param_1,...,\bm\param_M$ be random variables  defined on $(\Omega, \mathcal{F}, \mathbb{P})$ taking values in $(\pset, \mathcal{A})$ and be distributed as stated above. 
Moreover, we define $\bm\xi_m :=  \trace(\Cb(\bm\param_m) - \widehat{\Cb}(\bm\param_m))$. for $m = 1,\ldots,M$.

By \Cref{theorem_Wasserstein_param}, we have $\dWas(\mu, \widehat{\mu})^2 \leq \mathbb{E}[\bm\xi_1]$. 
Hence,
\begin{align*}
\mathbb{P}\left(\dWas(\mu, \widehat{\mu})^2  \leq \frac{1}{M}\sum_{m=1}^M \bm\xi_m + \varepsilon \right) 
&\geq \mathbb{P}\left(\mathbb{E}[\bm\xi_1]  \leq \frac{1}{M}\sum_{m=1}^M \bm\xi_m + \varepsilon \right) \notag \\
&\stackrel{\varepsilon > 0}{\geq} \mathbb{P}\left( \Large| \mathbb{E}[\bm\xi_1]  - \frac{1}{M}\sum_{m=1}^M \bm\xi_m \Large|  \leq \varepsilon \right)  \\ &\geq 1- \frac{\mathrm{Var}(\frac{1}{M}\sum_{m=1}^M \bm\xi_m)}{\varepsilon^2} \stackrel{\text{i.i.d.}}{=} 1- \frac{\mathrm{Var}(\xi_1)}{M \cdot  \varepsilon^2},
\end{align*}
where we employ the Chebyshev inequality in the second to last step.
\end{proof}

\section{Low-rank approximation of the covariance operator}\label{Sec_aca}
\subsection{Cross approximation}
%
%

Cross approximation \cite{bebendorf} is a technique to construct a low-rank approximation of a matrix from some of its rows and columns. More specifically, 
for a matrix $A\in\mathbb R^{n\times n}$ let us consider the rows $A(I,:)$ and columns $A(:,J)$ corresponding to 
index sets
$I=\{i_1,\dots,i_k\}$ and $J=\{j_1,\dots,j_k\}$, respectively. Assuming that the cross matrix
\[
A(I,J) = \begin{bmatrix}
          a_{i_1,j_1} & a_{i_1,j_2} & \cdots & a_{i_1,j_k} \\
          a_{i_2,j_1} & a_{i_2,j_2} & \cdots & a_{i_2,j_k} \\
\vdots & \vdots & & \vdots \\
          a_{i_k,j_1} & a_{i_k,j_2} & \cdots & a_{i_k,j_k} 
          \end{bmatrix}
\in \R^{k\times k}
\]
is invertible, the cross approximation of $A$ associated with $I$ and $J$ is given by \begin{equation}\label{eq:cross} A(:, J)A(I,J)^{-1}A(I, :).\end{equation}
This approximation has rank $k=|I|=|J|$.


\subsection{Adaptive cross approximation for an SPSD matrix}

The choices of $k$ and of the index sets $I,J$ are crucial for the accuracy of the cross approximation~\eqref{eq:cross}.
The \emph{adaptive cross approximation} algorithm (ACA) chooses $I,J$ in an adaptive way via a greedy strategy, analogous to complete pivoting in Gaussian elimination. In the first step of the procedure, $i_1,j_1$ are chosen such that the pivot element $A_{i_1,j_1}$ has maximal absolute value. Then the procedure is repeated for $A$ replaced by the residual matrix $A-A(:,i_1) A(i_1,j_1)^{-1} A(:,j_1)^T$, and so on. For an SPSD matrix $A$, the entry of maximal absolute value can always be found on the diagonal. As the residual matrix is again SPSD, this implies that ACA for SPSD matrices chooses index sets $I,J$ such that $I=J$. This greatly reduces the cost of ACA because only the diagonal elements of the residual matrices need to be inspected in order to determine the pivot elements.
Therefore, ACA has linear complexity with respect to $n$ and returns an SPSD cross approximation of the form
\[
 A_I := A(:,I) A(I,I)^{-1} A(:,I)^T.
\]
As the residual matrix $E_I:=A-A_I$ is also SPSD, its trace equals its nuclear norm:
\begin{equation} \label{eq:errortrace}
\norm{E_I}_*:=\sum_{j=1}^n\sigma_j(E_I)=\trace(E_I)=
\trace(A - A(:,I) A(I,I)^{-1} A(:,I)^T),
\end{equation}
with $\sigma_{j}(\cdot)$ denoting the $j$th singular value.
In view of \Cref{lemma_Wasserstein}, if $A$ is a covariance matrix then $\trace(E_I)^{1/2}$ is an upper bound for the squared Wasserstein distance $W_2$ between the Gaussian measures $\mathrm{N}(0, A)$ and $\mathrm{N}(0, A_I)$. The a priori error bound $\trace(E_I)\leq 4^k(n-k)\sigma_{k+1}(A)$, where $k=|I|$, has been proved in \cite[Theorem 1]{foster}. This estimate is pessimistic; typically ACA yields an approximation error that remains much closer to $\sigma_{k+1}(A)$. 

The adaptive cross approximation algorithm for SPSD matrices \cite{harbrecht} is reported in \Cref{alg:aca}. To simplify the description, $E_I$ is initialized to the complete matrix $A$. In a practical implementation, only the diagonal elements of $A$ are used and line~\ref{step:residual} is replaced by updating the diagonal entries of $E_I$ only. In turn, only the diagonal entries and the columns corresponding to $I$ of $A$ need to be evaluated. 
The cost of $k$ iterations of \Cref{alg:aca} is $\mathcal O( (k+c_A) k n)$, where $c_A$ denotes the cost of evaluating an entry of $A$.

\begin{algorithm} \small 
	\caption{Adaptive cross approximation for an SPSD matrix}\label{alg:aca}
\begin{algorithmic}[1]
					\Procedure{ACA}{$A,\mathsf{tol}$, $k_{\mathsf{max}}$}
\State Set $E_I:=A$, $k:=0$, $I:= \emptyset$ 
\For{$k:=1,2,\dots, k_{\mathsf{max}} $} 
    \State $i_k := \arg\max_{i} (E_{I})_{ii} $  
    \State $I \gets I\cup\{i_k\}$ 
    \State $u_k := E_I(:,i_k) / \sqrt{(E_I)_{i_ki_k}}$  
    \State $E_I := E_I - u_k u_k^T$ \label{step:residual}
	    \If {$\trace(E_I) \le \mathsf{tol}$} \label{step:stop} 
		\State \textbf{break}
		\EndIf
\EndFor
\State \Return $I$
\EndProcedure
\end{algorithmic}
\end{algorithm}

\subsection{Adaptive cross approximation for a parameter-dependent SPSD matrix} \label{subsec_ACA_param}
The approximation of a parameterized Gaussian random field requires the approximation of a parameterized family of covariance matrices, see \Cref{theorem_Wasserstein_param}. In this section we introduce an extension of ACA to the case of a parameterized family of matrices $A(\bm\param): \pset \to \R^{n\times n}$ such that $A(\bm\param)$  has an affine linear expansion:
\begin{equation} \label{eq:affinelinear}
A(\bm\param) = \varphi_1(\bm\param) A_1 + \cdots + \varphi_s(\bm\param)A_s , \quad \varphi_j:\pset \to \R, \quad A_j \in \R^{n\times n}, \qquad j=1,\dots,s
\end{equation}
and $A(\bm\param)$ is SPSD for each $\bm\param \in \pset$.  Again, we aim at designing a matrix free method, so we assume to have an efficient way to extract entries from the matrices $A_1,\dots,A_s$ in place of forming them explicitly. Moreover, we replace the parameter space $\pset$ with a finite surrogate set $\pset_f=\{\bm\param_1,\dots,\bm\param_m\} \subset \pset$, e.g. a uniform sampling over $\pset$ or, in the light of \Cref{cor:MonteCarlo}, samples from $\mu'$.

The goal of the algorithm is to provide a subset of indices $I$ such that \begin{equation} \label{eq:defAI}
A_I(\bm\param):=A(\bm\param)(:,I)[A(\bm\param)(I,I)]^{-1}A(\bm\param)(:,I)^T                                                                           
                                                                          \end{equation}
is a uniformly good approximation of $A(\bm\param)$, i.e. $\trace(A(\bm\param)-A_I(\bm\param))\leq \mathsf{tol}$ for any $\bm\param\in\pset_f$. Following a popular technique in reduced basis methods~\cite{Hesthaven2015}, the core idea of the method is to augment the set $I$ with a greedy strategy. First, the value $\bm\param^*$ which verifies  \begin{equation}\label{eq:max-trace}\trace(A(\bm\param^*)-A_I(\bm\param^*))=\max_{\bm\param\in\pset_f}\trace(A(\bm\param)-A_I(\bm\param)),\end{equation} is identified. Then, one step of \Cref{alg:aca} is applied to the matrix $A(\bm\param^*)-A_I(\bm\param^*)$ returning the new index that is appended to $I$. 

The most expensive part of the method sketched above is finding $\bm\theta^*$ since it requires to evaluate the objective function  for any point in $\pset_f$. In order to amortize the cost of multiple evaluations of $\trace(A(\bm\param)-A_I(\bm\param))=\trace(A(\bm\param)) - \trace( A_I(\bm\param))$ we precompute once and for all 
\[
t_j:= \trace(A_j),  \quad j= 1,\ldots,s
\]
so that for every $\bm\param \in \pset_f$, computing the quantity
\[
\trace(A(\bm\param))=\varphi_1(\bm\param)t_1+\dots+\varphi_s(\bm\param)t_s
\]
requires $2s -1$ flops and $s$ evaluations of the scalar functions $\varphi_j$.

In addition, whenever $I$ is updated, we store the matrices $A_1(I, I),\dots,A_s(I, I)$ and we compute the compact  QR factorization $Q_I\in\mathbb R^{n\times sk}$, $R_I\in\mathbb R^{sk\times sk}$ of  $[A_1(:, I),\dots,A_s(:, I)]$. In particular, for all $\bm\param\in\pset$ we have that 
\[
A(\bm\param)(:,I)= Q_IR_I\mathbf{\varphi}(\bm\param),\qquad \mathbf{\varphi}(\bm\param):=\begin{bmatrix}\varphi_1(\bm\param)\Id_{k}\\\vdots\\\varphi_s(\bm\param)\Id_{k}\end{bmatrix}.
\]
By denoting with $R_A(\bm\param)$ the Cholesky factor of $A(\bm\param)(I,I)$,  we can rewrite $\trace( A_I(\bm\param))$ as follows:
\begin{align}\label{eq:trace}
	\trace(A_I(\bm\param))&= \trace( A(\bm\param)(:,I) [A(\bm\param)(I,I)]^{-1} A(\bm\param)(:,I)^T  ) \nonumber\\
	&= \trace( Q_IR_I\mathbf{\varphi}(\bm\param) R_A(\bm\param)^{-1}R_A(\bm\param)^{-T}\mathbf{\varphi}(\bm\param)^TR_I^TQ_I^T) \nonumber\\
	&=  \trace( R_A(\bm\param)^{-T}\mathbf{\varphi}(\bm\param)^TR_I^TR_I\mathbf{\varphi}(\bm\param)R_A(\bm\param)^{-1})\\
	&=\norm{R_I\mathbf{\varphi}(\bm\param)R_A(\bm\param)^{-1}}_F^2,\nonumber
\end{align}
where  we used the cyclic property of the trace for deriving the third equality. 

The considerations above lead to \Cref{alg:param-aca}.
\begin{remark} \em 
	Instead of using the QR decomposition, the computation of $\trace(A_I(\bm\param))$ can also be carried out by precomputing the quantities 
	\[
	B_{ij}:=A_i(:,I)^TA_j(:,I),\qquad i,j=1,\dots,s
	\] 
	and exploiting the relation
\begin{align*}
\trace(A_I(\bm\param))&= \trace(R_A(\bm\param)^{-T}A(\bm\param)(:,I)^T   A(\bm\param)(:,I) R_A(\bm\param)^{-1} ) \nonumber\\
&=  
\trace(R_A(\bm\param)^{-T}B(\bm\param)R_A(\bm\param)^{-1}),
\end{align*}
where $B(\bm\param):=\sum_{i,j=1}^s\varphi_i(\bm\param)\varphi_j(\bm\param)B_{ij}$.
This formula might be implemented in place of \eqref{eq:trace} which requires to compute the QR decomposition of $A(\bm\param)(:,I)$. However, forming the matrix $B(\bm\param)$, which is the principal submatrix of $A(\bm\param)^2$ corresponding to the index set $I$, causes a loss of accuracy in the evaluation of the stopping criterion. Due to the squaring of the singular values of $A(\bm\param)(:,I)$, the computed quantity is unreliable when  $\trace(E_I)$ is below the square root of the machine precision. 
\end{remark}
\begin{algorithm} \small 
	\caption{Adaptive cross approximation for a parameterized family of SPSD matrices}\label{alg:param-aca}
	\begin{algorithmic}[1]
		\Procedure{param\_ACA}{$A_1,\dots,A_s,\varphi_1,\dots,\varphi_s,\pset_f,\mathsf{tol}$}
		\State Set $I:= \emptyset$ 
		\For{$j:=1,\dots,s$} 
		\State $t_j := \trace(A_j) $  
		\EndFor
		\For{$k:=1,2,\dots$} 
		\State Store matrices $A_1(I,I),\dots,A_s(I,I)$
		\State Compute $R_I$ such that $Q_IR_I=[A_1(:,I),\dots,A_s(:,I)]$ \label{line:updateRI}
		\State Set $\mathsf{res}_{\mathsf{max}}:=0$
		\For{$\bm\param:=\bm\param_1,\dots,\bm\param_m$}\label{step:inner-loop}
		\State Compute  $A(\bm\param)(I,I)=\sum_{j=1}^s\varphi_j(\bm\param)A_j(I,I)$
		\State	 Compute $R_A(\bm\param)$ such that $R_A(\bm\param)^TR_A(\bm\param)=A(\bm\param)(I,I)$
		\State $\mathsf{res}= \sum_{j=1}^s\varphi_j(\bm\param)t_j-\norm{R_I\mathbf{\varphi}(\bm\param)R_A(\bm\param)^{-1}}_F^2$   
\If {$\mathsf{res} \geq \mathsf{res}_{\mathsf{max}}$} 
\State $\mathsf{res}_{\mathsf{max}}=\mathsf{res}$ and $\bm\param^*=\bm\param$
\EndIf
		\EndFor
		\If {$\mathsf{res}_{\mathsf{max}} \le \mathsf{tol}$}  \label{line:stoppara}
		\State \textbf{break}
		\EndIf
		\State $i_{\mathsf{new}}=\Call{ACA}{A(\bm\param^*)-A_I(\bm\param^*), 0, 1}$
				\State $I \gets I\cup\{i_{\mathsf{new}}\}$
		\EndFor
		\State \Return $I$
		\EndProcedure
	\end{algorithmic}
\end{algorithm}

Note that the matrix $Q_I$ in line~\ref{line:updateRI} is actually not needed. More importantly, a careful implementation of \Cref{alg:param-aca} recognizes that each $A_j(:,I)$ grows by a column at a time. Using QR updating techniques~\cite{Golub2013}, the cost of executing line~\ref{line:updateRI} in each outer loop is 
$\mathcal O(ns^2k)$ --- instead of $\mathcal O(ns^2k^2)$ when computing the QR decomposition from scratch in each loop. Similarly, updating techniques could be used to accelerate the inner loop, but these are likely less important as long as $s$ and $k$ are much smaller than $n$.
Without such techniques, one inner loop costs $\mathcal O(sk^3+s^2k^2)$. In summary, the total cost of executing $k$ iterations of \Cref{alg:param-aca}
is $\mathcal O(ns^2k^2+m(sk^4+s^2k^3))$.
Additionally, $\mathcal O( kn s)$ evaluations of entries from the coefficients $A_j$ are needed.

\subsection{Robustness of ACA}

As we will explain in \Cref{Sec_IsotrCKer} the affine linear expansion~\eqref{eq:affinelinear} is often assured by an approximation of the true covariance operator.
This approximation error may destroy positive semi-definitness, a property that is assumed throughout the derivations above. In this case, we propose to use the affine linear expansion only for identifying the index set $I$, i.e. as input of Algorithm~\ref{alg:param-aca}, and then use the true covariance operator to form the cross approximation in the online phase. This last step is essential to ensure that the residual matrix is still SPSD and its trace represents a bound for the Wasserstein distance with respect to the true covariance operator.

We remark that running Algorithm~\ref{alg:aca} on a symmetric indefinite matrix $A$ returns an SPSD cross approximation $A_I$, because the method always chooses positive pivots.     
However, the stopping criterion at line~\ref{step:stop} of \Cref{alg:aca} loses its justification because the relation~\eqref{eq:errortrace} between the trace and the nuclear norm does not hold. On the other hand, if $A$ is very close to an SPSD matrix $\widetilde A$, then it is likely that $I$ is a good choice for the cross approximation of $\widetilde A$.  The next lemma establishes a result in this direction. The argument used in proof is inspired by the stability analysis of the pivoted Cholesky factorization~\cite[Section 10.3.1]{higham}. 
\begin{lemma} \label{Lemma_approxim}
	Consider symmetric matrices $A,E \in\mathbb R^{n\times n}$ such that $\widetilde A= A-E$ is SPSD and $\norm{E}_2\le \delta$ for some $\delta > 0$. Let $I$ be the index set returned  by \Cref{alg:aca} applied to $A$ with tolerance $\mathsf{tol}$. If $\rho:=\delta \norm{A(I,I)^{-1}}_2<1$ then 
	\[
	\trace (\widetilde A- \widetilde A_I)\leq \mathsf{tol} + (n-k)\delta \frac{(1+\norm{W}_2)^2}{1-\rho}
	\]
	where $\widetilde A_I:=\widetilde A(:,I) \widetilde A(I,I)^{-1} \widetilde A(I,:)$, $W:=A(I,I)^{-1}A(I,I^c)$ and
	$I^c:=\{1,\dots,n\}\setminus I$.
	\end{lemma}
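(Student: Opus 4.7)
The plan is to reduce $\trace(\widetilde A-\widetilde A_I)$ to the trace of a Schur complement of $\widetilde A$ and then to control that trace by combining the ACA stopping criterion on $A$ with a Loewner upper bound on the Schur complement of $\widetilde A$.

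First I would permute rows and columns so that $I$ occupies the leading block and partition
\[
A=\begin{pmatrix}A_{11} & A_{12}\\ A_{21} & A_{22}\end{pmatrix},\qquad \widetilde A=\begin{pmatrix}\widetilde A_{11} & \widetilde A_{12}\\ \widetilde A_{21} & \widetilde A_{22}\end{pmatrix},
\]
with $A_{11}=A(I,I)$ and $\widetilde A_{ij}=A_{ij}-E_{ij}$. The hypothesis $\rho<1$ and a standard Neumann series argument make $\widetilde A_{11}$ invertible, with $\|\widetilde A_{11}^{-1}\|_2\le\|A_{11}^{-1}\|_2/(1-\rho)$. A direct block computation then gives
\[
\widetilde A-\widetilde A_I=\begin{pmatrix}0 & 0\\ 0 & \widetilde S\end{pmatrix},\qquad A-A_I=\begin{pmatrix}0 & 0\\ 0 & S_A\end{pmatrix},
\]
with Schur complements $\widetilde S:=\widetilde A_{22}-\widetilde A_{21}\widetilde A_{11}^{-1}\widetilde A_{12}$ and $S_A:=A_{22}-A_{21}A_{11}^{-1}A_{12}$. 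Since $\widetilde A$ is SPSD so is $\widetilde S$, and the stopping criterion at line~\ref{step:stop} of Algorithm~\ref{alg:aca} yields $\trace(S_A)=\trace(A-A_I)\le\mathsf{tol}$, so the task reduces to bounding $\trace(\widetilde S)$.

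Next I would use the minimum characterization of the Schur complement: for every $Z\in\R^{k\times(n-k)}$, completing the square gives the identity
\[
\begin{pmatrix}Z^T & I\end{pmatrix}\widetilde A\begin{pmatrix}Z\\ I\end{pmatrix}=\widetilde S+(Z+\widetilde W)^T\widetilde A_{11}(Z+\widetilde W),\qquad \widetilde W:=\widetilde A_{11}^{-1}\widetilde A_{12},
\]
and $\widetilde A_{11}\succeq 0$ then yields the Loewner bound $\widetilde S\preceq\begin{pmatrix}Z^T & I\end{pmatrix}\widetilde A\begin{pmatrix}Z\\ I\end{pmatrix}$. The key idea is to take $Z=-W:=-A_{11}^{-1}A_{12}$, the Schur-optimal value for $A$ (but \emph{not} for $\widetilde A$); setting $Y:=\begin{pmatrix}-W\\ I\end{pmatrix}$, the symmetry relations $W^T A_{11}=A_{21}$ and $A_{11}W=A_{12}$ collapse the same identity applied to $A$ into $Y^T A Y=S_A$, so that $\widetilde S\preceq Y^T\widetilde A Y=S_A-Y^T E Y$.

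Finally I would take traces. Monotonicity under the Loewner order together with $\trace(S_A)\le\mathsf{tol}$ gives $\trace(\widetilde S)\le\mathsf{tol}+|\trace(Y^T E Y)|$, and since $Y^T E Y$ is an $(n-k)\times(n-k)$ matrix,
\[
|\trace(Y^T E Y)|\le(n-k)\|Y^T E Y\|_2\le(n-k)\|Y\|_2^2\|E\|_2\le(n-k)(1+\|W\|_2^2)\,\delta,
\]
using $\|Y\|_2^2=1+\|W\|_2^2$. The elementary inequalities $1+\|W\|_2^2\le(1+\|W\|_2)^2$ and $1/(1-\rho)\ge 1$ then deliver the stated bound. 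The main subtlety I foresee is the choice of the \emph{unperturbed} test vector $Z=-W$ in the Loewner bound: it is precisely this deliberately suboptimal choice that routes the estimate through $\trace(S_A)$ and imports the stopping-criterion bound. An expansion-based alternative mirroring the pivoted Cholesky stability argument cited in the statement also works, but then one must simplify the second-order remainder $\widetilde S-(S_A-Y^T E Y)$ to the clean form $-(W^T E_{11}+E_{21})\widetilde A_{11}^{-1}(E_{12}-E_{11}W)$, from which the $1/(1-\rho)$ factor enters genuinely through $\|\widetilde A_{11}^{-1}\|_2$.
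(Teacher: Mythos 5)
Your proposal is correct, and it takes a genuinely different route from the paper's proof. Both arguments start identically: permute so that $I$ leads, observe that $\widetilde A-\widetilde A_I$ (resp.\ $A-A_I$) is zero except for the trailing Schur complement $\widetilde S$ (resp.\ $S$), and use the stopping criterion to get $\trace(S)\le\mathsf{tol}$. From there the paper proceeds perturbatively: it expands $\widetilde A(I,I)^{-1}$ in a Neumann series (this is where $\rho<1$ enters quantitatively), writes $\widetilde S-S$ as a sum of correction terms, bounds $\norm{S-\widetilde S}_2\le\delta(1+\norm{W}_2)^2/(1-\rho)$, and then converts this to a trace bound via Weyl's eigenvalue perturbation inequality applied to the $(n-k)$ eigenvalue pairs. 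You instead exploit positive semi-definiteness of $\widetilde A$ structurally, through the extremal (Loewner-minimality) characterization of the Schur complement, evaluated at the deliberately suboptimal test matrix $Z=-W$ built from the \emph{unperturbed} matrix; this collapses to the exact one-line inequality $\widetilde S\preceq S-Y^TEY$ with $Y^T=(-W^T\;\ \Id)$, after which only trace monotonicity and $\norm{Y}_2^2=1+\norm{W}_2^2$ are needed. Your route uses $\rho<1$ only to guarantee invertibility of $\widetilde A(I,I)$ (so that $\widetilde A_I$, $\widetilde S$ are defined), avoids both the series expansion and the eigenvalue perturbation step, and in fact yields the slightly sharper intermediate bound $\trace(\widetilde A-\widetilde A_I)\le\mathsf{tol}+(n-k)\delta(1+\norm{W}_2^2)$, from which the stated estimate follows since $1+\norm{W}_2^2\le(1+\norm{W}_2)^2$ and $(1-\rho)^{-1}\ge1$; the paper's expansion-based argument, by contrast, generalizes more directly to the non-SPSD perturbation analysis of pivoted Cholesky from which it is adapted, but pays for it with the $1/(1-\rho)$ factor. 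One small point worth making explicit in your write-up: $Y^TEY$ is symmetric (since $E$ is), which is what justifies $|\trace(Y^TEY)|\le(n-k)\norm{Y^TEY}_2$, and $\widetilde A(I,I)\succeq 0$ together with its invertibility (from $\norm{A(I,I)^{-1}E(I,I)}_2\le\rho<1$) is what licenses discarding the completed square.
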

\begin{proof}
 Consider the Schur complements
\[
 S=A(I^c,I^c)-A(I^c,I) A(I,I)^{-1} A(I,I^c), \qquad 
 \widetilde S=\widetilde A(I^c,I^c)-\widetilde A(I^c,I) \widetilde A(I,I)^{-1} \widetilde A(I,I^c).
\]
Because $\rho < 1$, the Neumann series gives $$\widetilde A(I,I)^{-1}=A(I,I)^{-1}+ \sum_{j>0}(A(I,I)^{-1}E(I,I))^jA(I,I)^{-1}.$$  
This yields
\begin{align*}
\widetilde S&= S+E(I,I) + W^TE(I,I)\sum_{j\geq 0}[A(I,I)^{-1}E(I,I)]^jW \\ &+ E(I^c,I)\sum_{j\geq 0}[A(I,I)^{-1}E(I,I)]^jW+W^T\sum_{j\geq 0}[A(I,I)^{-1}E(I,I)]^jE(I, I^c)\\
&+E(I^c, I)\sum_{j\geq 0}[A(I,I)^{-1}E(I,I)]^jA(I,I)^{-1}E(I, I^c),
\end{align*}
 see also \cite[Lemma 10.10]{higham}. In turn, 
$\norm{S- \widetilde S}_2 \leq \delta\left(1 + \frac{\norm{W}_2^2+2\norm{W}_2+\rho}{1-\rho}\right)=\delta\frac{(1+\norm{W}_2)^2}{1-\rho}$.
Because $S$ and $\widetilde S$ are symmetric matrices, standard eigenvalue perturbation results~\cite[Thm 8.1.5]{Golub2013} imply that
\[ |\lambda_j-\widetilde\lambda_j|\leq \delta\frac{(1+\norm{W}_2)^2}{1-\rho},\] where 
$\lambda_j$ and $ \widetilde \lambda_j$, $j=1,\dots,n-k$, denote the ordered eigenvalues of $S$ and $\widetilde S$, respectively.
This bound implies 
\begin{align*}
\norm{\widetilde S}_* &=  \sum_{j = 1}^{n-k} \widetilde \lambda_j \le  \trace( S) +  \sum_{j = 1}^{n-k} |\lambda_j- \widetilde \lambda_j|\le \mathsf{tol}+(n-k)\delta\frac{(1+\norm{W}_2)^2}{1-\rho},
\end{align*}
where the last inequality uses the stopping criterion $\mathsf{tol}\geq\trace( A- A_I)=\trace( S)$ of \Cref{alg:aca}.
	\end{proof}
	
 The upper bound in \Cref{Lemma_approxim} indicates that the impact of the inexactness $\delta$ on the robustness of the trace criterion \eqref{eq:errortrace} may be magnified by the quantity $\norm{W}_2^2$. Typically, $\norm{W}_2^2$ remains moderate but it is difficult to provide a rigourous meaningful bound. The quite pessimistic bound $\norm{W}_F^2\leq \frac 13(n-k)(4^{k}-1)$ applies to some particular cases, see \cite[Lemma 10.13]{higham}. 
 
  An analogous result can be stated for the parameter dependent case where $\widetilde{A}(\bm\param)$ represents the full covariance operator, $A(\bm\param)$ is its linear separable approximation, and $E(\bm\param)=A(\bm\param)-\widetilde{A}(\bm\param)$ represents the approximation error. Since this generalization of \Cref{Lemma_approxim} is rather straightforward we refrain from reporting it.

\subsection{ACA sampling and its complexity}\label{sec:sampling}
Let us now consider a parameterized Gaussian measure $M$, as introduced in \Cref{Subs_Problem}, with linearly separable covariance operator $\Cb: \pset \rightarrow \mathrm{SPSD}(n)$. In the following, we explain how the ACA approximation of $\Cb$ enables us to sample fast from an approximation of $M(\cdot|\bm\param)$ for a large number of $\bm\param \in \pset$.

After applying \textsc{param\_ACA} (\Cref{alg:param-aca}) to $\Cb$, we obtain the approximate parameterized matrix $\Cb_I: \pset \rightarrow \mathrm{SPSD}(n)$ defined in~\eqref{eq:defAI}. The corresponding approximate Markov kernel $\widehat{M}$ is given by
$
\widehat{M}(\cdot |\bm\param) := \mathrm{N}(0, \Cb_I(\bm\param)). 
$
After this \emph{offline phase}, we proceed with the \emph{online phase}, that is, the sampling from $\widehat{M}$. 
For a given parameter $\bm\param \in \pset$, we compute the Cholesky factor $L \in \mathbb{R}^{k \times k}$ of $\Cb(\bm\param)(I,I)$.
For a sample $\xi \sim \mathrm{N}(0, \mathrm{Id}_{k})$ we have \begin{equation} \label{eq:approxsample}
\Cb(\bm\param)(:,I)L^{-T}\xi \sim \widehat{M}(\cdot | \bm\param) = \mathrm{N}(0, \Cb_I(\bm\param))                                                                
                                                               \end{equation}
because \begin{align*}
\Cb(\bm\param)(:,I)L^{-T}(\Cb(\bm\param)(:,I)L^{-T})^T &= \Cb(\bm\param)(:,I)L^{-T}L^{-1}\Cb(\bm\param)(:,I)^T \\&= \Cb(\bm\param)(:,I) \Cb(\bm\param)(I,I)^{-1}\Cb(\bm\param)(:,I)^T= \Cb_I(\bm\param).
\end{align*}
We now discuss the computational cost of drawing a sample from $\widehat{M}(\cdot | \bm\param)$ via~\eqref{eq:approxsample}. 
First, we need to construct the matrices $\Cb(\bm\param)(:,I), \Cb(\bm\param)(I,I)$ which comes at a cost of $\mathcal{O}(nks)$ if the linear expansion of $\Cb$ has $s$ terms. Then, we compute the Cholesky factor $L$ of $\Cb(\bm\param)(I,I)$ with $\mathcal{O}(k^3)$ operations, sample $\xi \sim  \mathrm{N}(0, \mathrm{Id}_{k})$,  and solve $L^T b = \xi$ at a cost of $\mathcal{O}(k^2)$. Finally, the sample is obtained by the matrix vector product $\Cb(\bm\param)(:,I) b$, which costs another $\mathcal{O}(nk)$.
In summary, the computational cost of one sample in the online phase is given by
$
\mathcal{O}(kns + k^3).
$
Hence, the cost of the complete procedure for drawing $u \in \mathbb{N}$ samples is given by
$$
\mathcal{O}(\underbrace{k^2 ns^2+m(k^4s+k^3s^2)}_{\text{offline}} + \underbrace{knsu + k^3u}_{\text{online}}). 
$$
Hence, the cost is linear in the size of the matrix $n$, the number of test parameters $m$, and the number $u$ of different samples. 
As our approach is most meaningful in situations where the rank $k$ is much smaller than $n$ and $s$ is small, the terms $k^2 ns^2$ and $knsu$ will typically dominate the offline and online phases, respectively. Let us assume that applying the non-parameterized ACA $u$ times costs $\mathcal{O}((\widetilde k+s)\widetilde k n u)$, with $\widetilde k\leq k$. Then, sampling with \textsc{param\_ACA} can be expected to be cheaper as long as $\widetilde k^2> ks$ and $m$ is not too large. 

When the affine expansion \eqref{eq:affinelinear} is an approximation of the true covariance operator, we use the latter both in the online phase of the approach based on \textsc{param\_ACA} and when applying the non-parameterized \textsc{ACA}. This changes the complexity of the two approaches by replacing the parameter $s$ with the cost of evaluating one entry of the true covariance operator. 

\begin{remark}
Working directly with the true covariance operator inside \textsc{param\_ACA}, i.e., identifying $\max_{\bm\param\in\bm\pset_f}\trace(A(\bm\param)-A_I(\bm\param))$ by applying $|I|$ steps of \textsc{ACA} for each value of $\bm\param$, yields an overall cost of $\mathcal O(nmk^3)$ for the offline phase. Although this is a much simpler approach, it provides a significantly higher cost when considering fine discretizations of $\pset$.
\end{remark}

\section{Isotropic covariance kernels}  \label{Sec_IsotrCKer}
 
Let $D \subseteq \mathbb{R}^{\ell}$ be some domain and $\| \cdot \|_D$ be a norm on $D$. A {covariance kernel} is a function $c: D\times D \rightarrow \mathbb{R}$ that is continuous, symmetric, and positive semidefinite. In the following, we consider \emph{isotropic} covariance kernels $c$, which take 
$$c(\mathbf x, \mathbf y) = \tilde{c}(\|\mathbf x-\mathbf y\|_D) \qquad \forall\mathbf x,\mathbf y \in D$$
for some function $\tilde{c}: [0, \infty) \rightarrow \mathbb{R}$.
Such a kernel is invariant with respect to isometries, e.g., translations and rotations in the case of the Euclidean norm.
Given points $\mathbf x_1,\ldots,\mathbf x_n \in D$, the covariance matrix $C \in \mathbb{R}^{n \times n}$ associated with $c$ is defined by
\begin{equation} \label{eq:covmatrix}
C_{ij} = c(\mathbf x_i, \mathbf x_j) \qquad (i,j = 1,\ldots,n).
\end{equation}
Parameterized covariance kernels are functions $c: D\times D \times \pset \rightarrow \mathbb{R}$ such that $c(\cdot, \cdot; \bm\param)$ is a covariance kernel for any $\bm\param \in \pset$. The associated parameterized covariance matrix $\mathbf{C}: \pset \rightarrow \mathbb{R}^{n \times n}$ is defined analogously as in~\eqref{eq:covmatrix}.

\subsection{Examples}
The following examples of parameterized covariance kernels will be considered throughout the rest of this work. We refer to the work by Stein \cite{stein} for further examples.
\begin{paragraph}{Gaussian covariance} \label{ex:gauss}
 Let $D \subseteq \mathbb R^\ell$, $\ell\in\mathbb N$ and $\param \in \pset := [\underline{\param}, \overline{\param}]$ for $\underline{\param}>0$. 
We consider a Gaussian covariance kernel on $D$  with correlation length $\param$  defined as \begin{equation}\label{eq:gauss-ker}c(\mathbf x,\mathbf y; \param) = \sigma^2\exp\left(- \frac{ \|\mathbf x-\mathbf y \|^2_2}{2 \param^2} \right) \qquad (\mathbf x,\mathbf y \in D).\end{equation}
for some $\sigma^2 \in (0, \infty)$.
The correlation length determines the strength of correlation between two points depending on their distance.
\end{paragraph}
\begin{paragraph}{Mat\'ern covariance}  \label{ex:Matern}
 We let $D \subseteq \mathbb R^\ell$, $\ell\in\mathbb N$ and $\bm\param \in \pset := [\underline{\param}, \overline{\param}]\times (0, \infty]$ for $\underline{\param}>0$.
The Mat\'ern covariance kernel on $D$  with correlation length $\param_1$ and smoothness $\param_2$ is defined as \begin{equation}\label{eq:matern}c(\mathbf x,\mathbf y; \bm \param) = \sigma^2 \frac{2^{1-\theta_2}}{\Gamma(\theta_2)}\left(\frac{\sqrt{2\theta_2}\|\mathbf x-\mathbf y\|_2}{\theta_1} \right)^{\theta_2}K_{\theta_2}\left(\frac{\sqrt{2\theta_2}\|\mathbf x-\mathbf y\|_2}{\theta_1} \right) \qquad (\mathbf x,\mathbf y \in D),\end{equation}
where $\Gamma$ is the gamma function and $K_{\theta_2}$ is the modified Bessel function of second kind with parameter $\theta_2$.

The correlation length has the same interpretation as for Gaussian kernels. Moreover,
a Gaussian kernel with correlation length $\theta_1$ can be viewed as a special case of a Mat\'ern kernel by taking the limit $\theta_2 \rightarrow \infty$.
For finite values, the parameter $\theta_2$ determines the smoothness of the random samples; see, e.g., \cite[Proposition 3.2]{Stuart2018} for details.
\end{paragraph}

We aim at applying \Cref{alg:param-aca} to obtain low-rank approximations of the covariance matrix $\mathbf{C}(\bm\param)$ associated with any of the parameterized covariance kernels above. However, the linear separability assumption~\eqref{eq:affinelinear} is not satisfied because the kernels depend nonlinearly on $\bm\param$.
To address this issue, an approximate linearization of Mat\'ern kernels using Taylor expansion has been proposed in~\cite{LATZ2019FastSa}.
For smaller correlation lengths, this method becomes inefficient and we therefore discuss more general and more effective methods for linearising isotropic covariance kernels in the following.

\subsection{Linearization of isotropic covariance kernels}

Finding an (approximate) expansion~\eqref{eq:affinelinear} for $\mathbf{C}(\theta)$ is directly related to finding an expansion that separates 
the spatial variables from the parameters in the covariance kernel. To see this, let us 
consider a parameterized isotropic covariance kernel  $c(\mathbf x,\mathbf y; \bm\param) = \tilde{c}(\|\mathbf x-\mathbf y \|_2,\bm\param)$ with $\tilde{c}:[\underline d,\overline d]\times \pset\rightarrow  \mathbb R$,
 $\underline d=\min_D\|\mathbf x-\mathbf y\|_2$, and $\overline d=\max_D\|\mathbf x-\mathbf y\|_2$.  If 
 \begin{equation} \label{eq:separatetildec}
\tilde{c}(d,\bm\param) \approx \tilde{c}_s(d,\bm\param) := \sum_{j=1}^s \varphi_j(\bm\param)a_j(d)  
 \end{equation}
 for functions $\varphi_j:\pset\rightarrow  \mathbb R$ and $a_j : [\underline d,\overline d] \to \mathbb R$ then
\begin{equation}\label{eq:fun-aca}
c(\mathbf x,\mathbf y;\bm\param) \approx c_s(\mathbf x,\mathbf y;\bm\param) := \sum_{j=1}^s \varphi_j(\bm\param) a_j(\|\mathbf x-\mathbf y \|_2). 
\end{equation}
By defining $A_j$ to be the matrix associated with $a_j(\|\mathbf x-\mathbf y \|_2)$, as in~\eqref{eq:covmatrix}, it follows that the matrix
$\mathbf C_s(\bm\param)$ is associated with $c_s(\mathbf x,\mathbf y;\bm\param)$.
Note that, unless \eqref{eq:fun-aca} is an exact expansion, the matrices $\mathbf C_s(\bm\param)$ --- while still being symmetric --- are not guaranteed to be positive semi-definite for every $\bm\param \in \pset$. 
\Cref{Lemma_approxim} indicates that ACA remains robust despite the loss of semi-definiteness and, assuming that the tolerance of ACA is on the level of the approximation error of~\eqref{eq:separatetildec} then the total error committed by ACA and the expansion can be expected to stay on the same level.

The separable expansion~\eqref{eq:separatetildec} is the continuous analogue of low-rank matrix approximation. Several methods have been proposed to address this task, see, e.g.,~\cite{bebendorfACA,townsendthesis,nouy}. 
In the following sections we describe how we compute such an expansion depending on the  number of parameters.

\subsubsection{1D parameter space}\label{subsubsec_linearly_ACA}

 When there is only one parameter $\theta$ and $\pset$ is an interval then $\tilde c(d,\param)$ becomes a bivariate function on a rectangular domain. In this situation  we can apply the continuous analogue of the ACA for bivariate functions~\cite{bebendorfACA}. To implement this method in practice, it needs to be combined with a discretization of the variables. In~\cite{townsend13}, the Chebyshev expansion is proposed for the latter, an approach implemented in the command \texttt{cdr} of the toolbox Chebfun2, which is used in our experiments.

\begin{example} \label{exam_functi_ACA_Gauss}
Let us consider the Gaussian covariance kernel from \Cref{ex:gauss} with $D=[0,1]^2$, so that $[\underline d,\overline d]=[0,\sqrt 2]$   and  $\pset =  [0.1, \sqrt{2}]$.  The accuracy of the separable expansion \eqref{eq:fun-aca} --- computed via the Chebfun2 command \texttt{cdr}  --- is shown in \Cref{Figure_error_evolution_ACA_functions} (left);  we plot the maximum absolute error $\max_{d\in[0,\sqrt 2]}|\tilde c(d,\param) - \tilde{c}_K|$ --- as $K$ increases --- for different choices of $\param$ in $\pset$.
We remark  that --- already for $K=1,2$ --- $\tilde c_K$ is  accurate when considering $\param=\underline{\param} = 0.1$ and $\param=\overline{\param} = \sqrt{2}$ because the ACA for functions selects those values of $\param$ as pivots. 

 We repeat the experiment for a smaller minimal correlation length $\underline{\param} = 0.001$.
 This case is problematic when using polynomial expansions for performing the approximation~\eqref{eq:separatetildec}; see the results reported in \cite{LATZ2019FastSa}.  
 Although its convergence is somewhat slower, the ACA for functions still performs reasonably well; see \Cref{Figure_error_evolution_ACA_functions} (right).
\end{example} 
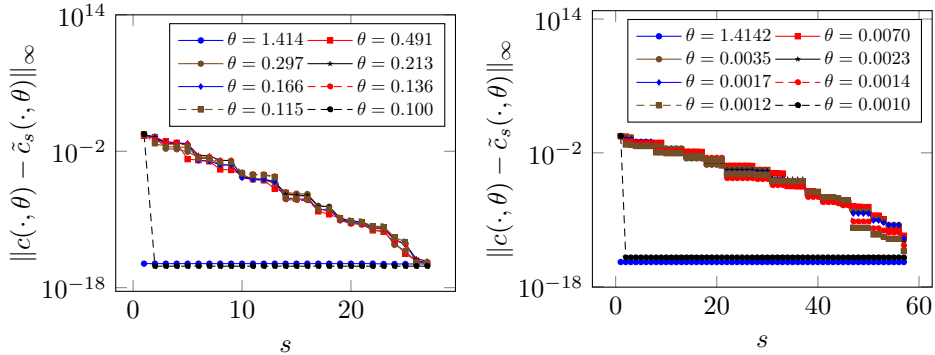
\begin{figure}
	\centering
	\begin{minipage}{.49\linewidth}
		\begin{tikzpicture}
		\begin{semilogyaxis}[
		xlabel = $s$, ylabel=$\norm{c(\cdot,\param)-\tilde c_s(\cdot,\param)}_\infty$,
		width=.98\linewidth,legend style={nodes={scale=0.7, transform shape}}, legend pos = north east,legend columns=2,  ymax = 1e14]
		\addplot+[mark options={scale=0.5}]  table[x index = 0, y index = 1] {test_aca_fun1.dat};
		\addplot+[mark options={scale=0.5}]  table[x index = 0, y index = 2] {test_aca_fun1.dat};
		\addplot+[mark options={scale=0.5}]  table[x index = 0, y index = 3] {test_aca_fun1.dat};
		\addplot+[mark options={scale=0.5}]  table[x index = 0, y index = 4] {test_aca_fun1.dat};
				\addplot+[mark options={scale=0.5}]  table[x index = 0, y index = 5] {test_aca_fun1.dat};
		\addplot+[mark options={scale=0.5}]  table[x index = 0, y index = 6] {test_aca_fun1.dat};
		\addplot+[mark options={scale=0.5}]  table[x index = 0, y index = 7] {test_aca_fun1.dat};
		\addplot+[mark options={scale=0.5}]  table[x index = 0, y index = 8] {test_aca_fun1.dat};
		\legend{$\param =1.414$,$\param =0.491$,$\param =0.297$,$\param =0.213$,$\param =0.166$,$\param =0.136$,$\param =0.115$,$\param =0.100$,};
		\end{semilogyaxis}
		\end{tikzpicture}    
	\end{minipage}~\begin{minipage}{.49\linewidth}
			\begin{tikzpicture}
	\begin{semilogyaxis}[
	xlabel = $s$,  ylabel=$\norm{c(\cdot,\param)-\tilde c_s(\cdot,\param)}_\infty$,
	width=.98\linewidth,legend style={nodes={scale=0.7, transform shape}}, legend pos = north east,legend columns=2,  ymax = 1e15]
	\addplot+[mark options={scale=0.5}]  table[x index = 0, y index = 1] {test_aca_fun2.dat};
	\addplot+[mark options={scale=0.5}]  table[x index = 0, y index = 2] {test_aca_fun2.dat};
	\addplot+[mark options={scale=0.5}]  table[x index = 0, y index = 3] {test_aca_fun2.dat};
	\addplot+[mark options={scale=0.5}]  table[x index = 0, y index = 4] {test_aca_fun2.dat};
	\addplot+[mark options={scale=0.5}]  table[x index = 0, y index = 5] {test_aca_fun2.dat};
	\addplot+[mark options={scale=0.5}]  table[x index = 0, y index = 6] {test_aca_fun2.dat};
	\addplot+[mark options={scale=0.5}]  table[x index = 0, y index = 7] {test_aca_fun2.dat};
	\addplot+[mark options={scale=0.5}]  table[x index = 0, y index = 8] {test_aca_fun2.dat};
	\legend{$\param =1.4142$,$\param =0.0070$,$\param =0.0035$,$\param =0.0023$,$\param =0.0017$,$\param =0.0014$,$\param =0.0012$,$\param =0.0010$,};
	\end{semilogyaxis}
	\end{tikzpicture} 
	\end{minipage}
 	\caption{Gaussian covariance kernel. Error associated with the approximate covariance kernel  $\tilde c_s$ vs. length of the expansion $s$, in the case  $\param \in [0.1, \sqrt{2}]$ (left) and $\param \in [0.001, \sqrt{2}]$ (right). Each curve represents the infinity norm $\norm{c(\cdot,\param)-\tilde c_s(\cdot,\param)}_\infty$ for a fixed value of $\param$,  approximated by taking the maximum over 500 equispaced values for $d$ in $[0, \sqrt{2}]$.} 
\label{Figure_error_evolution_ACA_functions}
\end{figure}
 
\subsubsection{Multidimensional parameter space}

In the case of a multidimensional $\pset$ we rely on a slight modification of the so called \emph{empirical interpolation method} (EIM) \cite{eim}. Before sketching the procedure, we  recall the notion of \emph{quasimatrix}~\cite{townsend15}.
\begin{definition}
	Let $C([\underline d, \overline d])$ be the set of continuous real functions on $[\underline d, \overline d]$ and $r\in\mathbb N$. An $[\underline d, \overline d]\times r$ quasimatrix is an ordered  $r$-tuple of functions in $C([\underline d, \overline d])$.
	\end{definition}
The name  quasimatrix comes from the representation of a generic tuple $(f_1(d),\dots,f_r(d))$  as a $[\underline d, \overline d]\times r$ matrix $F:=[f_1(d)|\dots|f_r(d)]$ where the continuous row index refers to the argument $d$ while the discrete column index selects the function. Common operations for matrices can be easily extended to quasimatrices. For instance, given a finite set $\mathcal I=\{d_1,\dots,d_s\}\subset [\underline d, \overline d]$  we define   
\[
F(\mathcal I,:):=\begin{bmatrix}
f_1(d_1)&\dots&f_r(d_1)\\
\vdots&&\vdots\\
f_1(d_s)&\dots&f_r(d_s)
\end{bmatrix}\in\mathbb R^{s\times r}.
\]Intuitively, the product $F\cdot M$ with $M\in\mathbb R^{r\times p}$ is defined as the $[\underline d, \overline d]\times p$ quasimatrix whose columns are linear combinations of the columns of $F$. Moreover, the most used matrix factorizations, e.g. LU, QR and SVD, can be extended to quasimatrices \cite{townsend15}. 
 Analogously, we define $r$-tuple of real functions of $\bm{\param}$ as transposed quasimatrices, i.e.  $r\times \pset$ matrices, and the corresponding matrix operations.
 The algorithm that we employ for computing the affine linear expansion \eqref{eq:affinelinear} combines EIM with the SVD of quasimatrices. It proceeds as follows:
\begin{enumerate}
	\item Generate samples  $\bm\param_1,\dots,\bm\param_{r}$ from $\pset$.
	\item Define the snapshot functions $\hat c_j(d):=\tilde c(d,\bm\param_j)$, $j=1,\dots,r$ and form the quasi-matrix
	\[
	F=\begin{bmatrix}
	\hat c_1(d)|\dots|\hat c_r(d)
	\end{bmatrix}\in \mathbb R^{[\underline d, \overline d]\times r}.
	\]
	\item Compute the SVD of  $F$:
	\[
	F = U\Sigma V^*,\quad U\in \mathbb R^{[\underline d, \overline d]\times r}, \quad \Sigma,V\in\mathbb R^{r\times r}.
	\]
Truncate the SVD decomposition by keeping the first $s$ columns of $u_1(d),\dots, u_s(d)$ of $U$,  which corresponds to the diagonal entries in $\Sigma$ that are above a prescribed tolerance $\tau$.
	\item  Determine a set $\mathcal I=\{d_1,\dots,d_s\}$ of nodes for interpolation by using the greedy strategy of EIM, reported in \Cref{alg:index-sel}.
	\item Set $a_j(\mathbf x,\mathbf y):= u_j(\|\mathbf x-\mathbf y\|_2)$, $j=1,\dots s$ and define the functions $\varphi_j(\bm\param)$ via the transposed quasimatrix which solves the linear system
	\[
	\begin{bmatrix}
	 u_1(d_1)&\dots& u_s(d_1)\\
	 \vdots&\cdots&\vdots\\
	 	 u_1(d_s)&\dots& u_s(d_s)
	\end{bmatrix}
	\begin{bmatrix}
	\varphi_1(\bm\param)\\
	\vdots\\
	\varphi_s(\bm\param)
	\end{bmatrix}=
	\begin{bmatrix}
	\tilde c(d_1,\bm\param)\\
	\vdots\\
	\tilde c(d_s,\bm\param)
	\end{bmatrix}.
	\]
\end{enumerate}
The whole procedure is reported in \Cref{alg:eim}. The practical implementation makes again use of the toolbox Chebfun2  which provides all the functionalities for handling quasimatrices. 

The sampling at step 1 can be performed via a greedy algorithm, as in \cite{eim}, or with other strategies that aim at mitigating the curse of dimensionality when considering a high dimensional parameter space \cite{sparsegrids,greedy1,lohr}. Since in our numerical tests we do not need to deal with very high dimensional parameter spaces, we just perform a tensorized equispaced sampling of $\pset$. 
The accuracy of \Cref{alg:eim} is tested in the following example.

\begin{example} \label{exam_functi_eim}
	Let us consider the Mat\'ern covariance kernel from \Cref{ex:gauss} with $\text{diam}(D)=\sqrt 2$,    $\param_1\in  [0.1, \sqrt{2}]$ and $\param_2\in  [2.5, 7.5]$.  The accuracy of the separable expansion \eqref{eq:fun-aca} --- computed via \Cref{alg:eim} --- is shown in \Cref{Figure_error_evolution_eim};  each curve represents  the maximum absolute error $\max_{d\in[0,\sqrt 2]}|\tilde c(d,\bm\param) - \tilde{c}_s(d,\bm \param)|$ --- as $s$ increases --- for $\bm\param=(\param_1,\param_2)$ chosen on a $10\times 10$ grid obtained as tensor product of equispaced values on $[0.1, \sqrt{2}]$ and  $[2.5, 7.5]$, respectively. The maximum of the error is approximated by taking its largest value over $500$ equispaced samples of $d$ in $[0,\sqrt 2]$. 
\end{example} 

 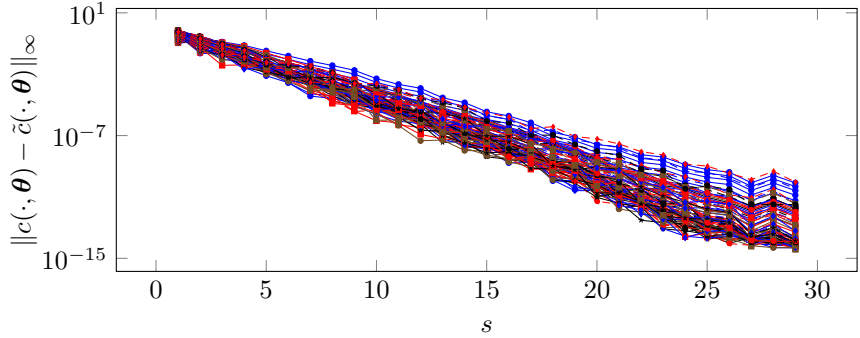
\begin{figure}
	\centering
	\begin{tikzpicture}
\begin{semilogyaxis}[
xlabel = $s$,  ylabel=$\norm{c(\cdot,\bm\param)-\tilde{c}(\cdot,\bm\param)}_\infty$,
width=.9\linewidth,height= 0.4\linewidth,legend style={font=\tiny}]
\foreach \j in {1, ..., 100} {
	\addplot+[mark options={scale=0.5}] table[x index = 0, y index = \j] {test_deim.dat};
}
\end{semilogyaxis}
\end{tikzpicture} 
	\caption{Mat\'ern covariance kernel. Error associated with the approximate covariance kernel  $\tilde c_s$ vs. length of the expansion $s$, for  $(\param_1,\param_2)\in [0.1, \sqrt{2}]\times[2.5, 7.5]$. Each curve represents the infinity norm $\norm{c(\cdot,\bm\param)-\tilde c_s(\cdot,\bm\param)}_\infty$ for a fixed pair $\bm\param=(\param_1,\param_2)$,  approximated by taking the maximum over 500 equispaced values for $d$ in $[0, \sqrt{2}]$.}
	\label{Figure_error_evolution_eim}
\end{figure}
\begin{algorithm}
	\caption{Interpolation nodes selection}\label{alg:index-sel}
	\begin{algorithmic}[1]
		\Statex{\textbf{procedure}} node\_selection($u_1(d),\dots,u_K(d)$)
		\Comment{real functions  defined on $[\underline d, \overline d]$}
		\State Set $d_1= \arg\max_{[\underline d, \overline d]} |u_1(d)|$,\ \ \ $\mathcal I = \{d_1\}$,\ \ \ $U = [u_1(d)]$ 
		\For{$j:=2,\dots,s $} 
		\State $c \gets U(\mathcal I, 1:j-1)^{-1}u_j(\mathcal I)$   
		\State $r(d)\gets u_j(d)- U\cdot c$ 
		\State $d_j\gets \arg\max_{[\underline d, \overline d]}|r(d)|$
		\State $U\gets [U, u_j(d)]$,\ \ \ $\mathcal I\gets \mathcal I\cup \{d_j\}$
		\EndFor
		\State \textbf{return} $\mathcal I$ 
	\end{algorithmic}
\end{algorithm}
\begin{algorithm}
	\caption{Empirical Interpolation method}\label{alg:eim}
	\begin{algorithmic}[1]
		\Statex{\textbf{procedure}} EIM($\tilde c(d, \bm\param)$, $\tau$)
		\Comment{$\tilde c$ real function  defined on $[\underline d, \overline d]\times\pset $}
		\State Sample $\bm\param_1,\dots,\bm\param_r$ from $\pset$ 
		\State Set $F=[\tilde c( d,\bm\param_1)|\dots|\tilde c(d, \bm\param_r)]\in\mathbb R^{[\underline d, \overline d]\times r}$
		\State $[U, \Sigma, V] \gets \text{svd}(F)$
		\State Compute $s$ s.t. $\Sigma_{s,s}>\tau>\Sigma_{s+1,s+1}$
		\State $\mathcal I\gets \text{node\_selection}(U(:,1),\dots,U(:,s))$
		\State $U_s\gets U(\mathcal I, 1:s)$
		\For{$j=1,\dots s$}
		\State $a_j(\bm x,\bm y)\gets U(\|\bm x-\bm y\|_2, j)$
		\State $\varphi_j(\bm\param)\gets e_j^T U_s^{-1}\tilde c(\mathcal I, \bm\param)$\Comment{$e_j$ $j$-th vector of the canonical basis}
		\EndFor
		\State \textbf{return} $a_1,\dots,a_s, \varphi_1,\dots,\varphi_s$
	\end{algorithmic}
\end{algorithm}

\section{Numerical experiments} \label{Sec_NumExp}

We test the performance of \Cref{alg:param-aca} for the examples discussed in the previous section.
All experiments have been carried out on a Laptop with a dual-core Intel Core i7-7500U 2.70 GHz CPU, 256KB of level 2 cache, and 16 GB of RAM. The
algorithms are implemented in Matlab and tested under MATLAB2019a, 
with MKL BLAS version 2018.0.3 utilizing both cores.
\subsection{Parameterized  Gaussian covariance kernel} \label{Subsec_Num_exp_GRF}

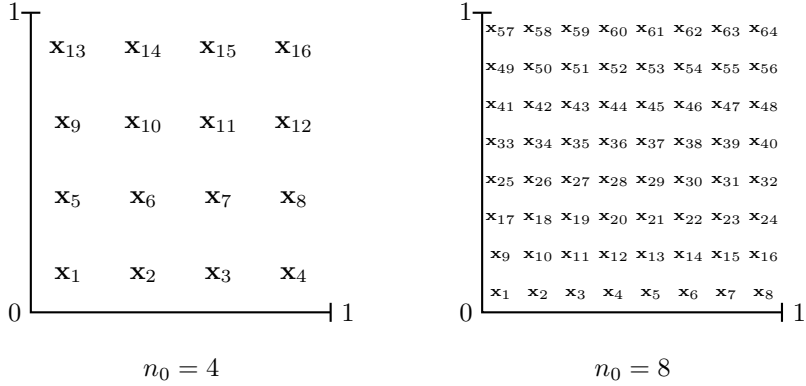
\begin{figure}[thb]
\centering
\begin{tikzpicture}[scale=1]
\draw [|-|,thick] (0.5,4.5) node (yaxis) [left] {$1$}
        |- (4.5,0.5) node (xaxis) [right] {$1$};
        \draw (0.5,0.5) node [left] {$0$};
        \draw (0.75,0.75) node {\tiny $\mathbf{x}_1$};
        \draw (1.25,0.75) node {\tiny $\mathbf{x}_2$};
        \draw (1.75,0.75) node {\tiny $\mathbf{x}_3$};
        \draw (2.25,0.75) node {\tiny $\mathbf{x}_4$};
        \draw (2.75,0.75) node {\tiny $\mathbf{x}_5$};
        \draw (3.25,0.75) node {\tiny $\mathbf{x}_6$};
        \draw (3.75,0.75) node {\tiny $\mathbf{x}_7$};
        \draw (4.25,0.75) node {\tiny $\mathbf{x}_8$};
        
        \draw (0.75,1.25) node {\tiny $\mathbf{x}_{9}$};
        \draw (1.25,1.25) node {\tiny $\mathbf{x}_{10}$};
        \draw (1.75,1.25) node {\tiny $\mathbf{x}_{11}$};
        \draw (2.25,1.25) node {\tiny $\mathbf{x}_{12}$};
        \draw (2.75,1.25) node {\tiny $\mathbf{x}_{13}$};
        \draw (3.25,1.25) node {\tiny $\mathbf{x}_{14}$};
        \draw (3.75,1.25) node {\tiny $\mathbf{x}_{15}$};
        \draw (4.25,1.25) node {\tiny $\mathbf{x}_{16}$};
        
        \draw (0.75,1.75) node {\tiny $\mathbf{x}_{17}$};
        \draw (1.25,1.75) node {\tiny $\mathbf{x}_{18}$};
        \draw (1.75,1.75) node {\tiny $\mathbf{x}_{19}$};
        \draw (2.25,1.75) node {\tiny $\mathbf{x}_{20}$};
        \draw (2.75,1.75) node {\tiny $\mathbf{x}_{21}$};
        \draw (3.25,1.75) node {\tiny $\mathbf{x}_{22}$};
        \draw (3.75,1.75) node {\tiny $\mathbf{x}_{23}$};
        \draw (4.25,1.75) node {\tiny $\mathbf{x}_{24}$};
                
        \draw (0.75,2.25) node {\tiny $\mathbf{x}_{25}$};
        \draw (1.25,2.25) node {\tiny $\mathbf{x}_{26}$};
        \draw (1.75,2.25) node {\tiny $\mathbf{x}_{27}$};
        \draw (2.25,2.25) node {\tiny $\mathbf{x}_{28}$};
        \draw (2.75,2.25) node {\tiny $\mathbf{x}_{29}$};
        \draw (3.25,2.25) node {\tiny $\mathbf{x}_{30}$};
        \draw (3.75,2.25) node {\tiny $\mathbf{x}_{31}$};
        \draw (4.25,2.25) node {\tiny $\mathbf{x}_{32}$};
        
        \draw (0.75,2.75) node {\tiny $\mathbf{x}_{33}$};
        \draw (1.25,2.75) node {\tiny $\mathbf{x}_{34}$};
        \draw (1.75,2.75) node {\tiny $\mathbf{x}_{35}$};
        \draw (2.25,2.75) node {\tiny $\mathbf{x}_{36}$};
        \draw (2.75,2.75) node {\tiny $\mathbf{x}_{37}$};
        \draw (3.25,2.75) node {\tiny $\mathbf{x}_{38}$};
        \draw (3.75,2.75) node {\tiny $\mathbf{x}_{39}$};
        \draw (4.25,2.75) node {\tiny $\mathbf{x}_{40}$};
                    
        \draw (0.75,3.25) node {\tiny $\mathbf{x}_{41}$};
        \draw (1.25,3.25) node {\tiny $\mathbf{x}_{42}$};
        \draw (1.75,3.25) node {\tiny $\mathbf{x}_{43}$};
        \draw (2.25,3.25) node {\tiny $\mathbf{x}_{44}$};
        \draw (2.75,3.25) node {\tiny $\mathbf{x}_{45}$};
        \draw (3.25,3.25) node {\tiny $\mathbf{x}_{46}$};
        \draw (3.75,3.25) node {\tiny $\mathbf{x}_{47}$};
        \draw (4.25,3.25) node {\tiny $\mathbf{x}_{48}$};
                
        \draw (0.75,3.75) node {\tiny $\mathbf{x}_{49}$};
        \draw (1.25,3.75) node {\tiny $\mathbf{x}_{50}$};
        \draw (1.75,3.75) node {\tiny $\mathbf{x}_{51}$};
        \draw (2.25,3.75) node {\tiny $\mathbf{x}_{52}$};
        \draw (2.75,3.75) node {\tiny $\mathbf{x}_{53}$};
        \draw (3.25,3.75) node {\tiny $\mathbf{x}_{54}$};
        \draw (3.75,3.75) node {\tiny $\mathbf{x}_{55}$};
        \draw (4.25,3.75) node {\tiny $\mathbf{x}_{56}$};
                       
        \draw (0.75,4.25) node {\tiny $\mathbf{x}_{57}$};
        \draw (1.25,4.25) node {\tiny $\mathbf{x}_{58}$};
        \draw (1.75,4.25) node {\tiny $\mathbf{x}_{59}$};
        \draw (2.25,4.25) node {\tiny $\mathbf{x}_{60}$};
        \draw (2.75,4.25) node {\tiny $\mathbf{x}_{61}$};
        \draw (3.25,4.25) node {\tiny $\mathbf{x}_{62}$};
        \draw (3.75,4.25) node {\tiny $\mathbf{x}_{63}$};
        \draw (4.25,4.25) node {\tiny $\mathbf{x}_{64}$};

                \draw (2.5,0) node [below] {$n_0 = 8$};

\draw [|-|,thick] (-5.5,4.5) node (yaxis) [left] {$1$}
        |- (-1.5,0.5) node (xaxis) [right] {$1$};
        \draw (-5.5,0.5) node [left] {$0$};
        \draw (-5,1) node {$\mathbf{x}_1$};
        \draw (-4,1) node {$\mathbf{x}_2$};
        \draw (-3,1) node {$\mathbf{x}_3$};
        \draw (-2,1) node {$\mathbf{x}_4$};
                \draw (-5,2) node {$\mathbf{x}_5$};
        \draw (-4,2) node {$\mathbf{x}_6$};
        \draw (-3,2) node {$\mathbf{x}_7$};
        \draw (-2,2) node {$\mathbf{x}_8$};
                        \draw (-5,3) node {$\mathbf{x}_9$};
        \draw (-4,3) node {$\mathbf{x}_{10}$};
        \draw (-3,3) node {$\mathbf{x}_{11}$};
        \draw (-2,3) node {$\mathbf{x}_{12}$};
                     \draw (-5,4) node {$\mathbf{x}_{13}$};
        \draw (-4,4) node {$\mathbf{x}_{14}$};
        \draw (-3,4) node {$\mathbf{x}_{15}$};
        \draw (-2,4) node {$\mathbf{x}_{16}$};
                \draw (-3.5,0) node [below] {$n_0 = 4$};
                
\end{tikzpicture}
\caption{Schematic of the definitions of the nodes $\mathbf{x}_{1}, \ldots, \mathbf{x}_{n}$, for $n_0 \in \{4,8\}$.} 
\label{fig_nodes}
\end{figure}
Let us consider  a hierarchical Gaussian random field on $D=[0,1]^2$ with mean $\bf 0$ and covariance operator induced by the 
 Gaussian covariance kernel $c(\mathbf x,\mathbf y; \param)$ defined in \eqref{eq:gauss-ker} with $\param \in \pset = [0.1, \sqrt{2}]$. The finite surrogate $\pset_f$  of the parameter space is obtained by taking  equispaced points on $\pset$.
The approximate linearized kernel $\widetilde c_s$ is retrieved via the function-valued ACA as  in  \Cref{exam_functi_ACA_Gauss}. The  cut-off value is set to $s = 18$ which provides, according to the results shown in \Cref{Figure_error_evolution_ACA_functions}, a cut-off error of about $10^{-8}$. 
 
We discretize the random field on a regular grid with $n:= n_0^2$ grid points. 
In turn, the matrix $\mathbf{C}(\theta) \in \mathbb{R}^{n \times n}$ is given by

\begin{align*}
\mathbf{C}(\theta)_{i,j} &:= \frac{1}{n} \cdot c\left(\mathbf x_i,\mathbf x_j; \param\right)  &(i, j = 1,\ldots,n),
\\ \mathbf x_i &:= \left(\frac{\mathrm{mod}(i,n_0) + 0.5}{n_0 +1}, \frac{\mathrm{div}(i,n_0) + 0.5}{n_0 +1}\right)^T &(i = 1,\ldots,n),
\end{align*}
where $\mathrm{div}(i,n_0)$ represents the integer division of $i$ by $n_0$ and $\mathrm{mod}(i,n_0)$ represents the associated remainder term. 
We illustrate the node positions with associated indices in \Cref{fig_nodes}. Note that this covariance matrix corresponds to a finite element discretization of the random field using piecewise constant ansatzfunctions with square supports and quadrature with one node in the centre of each of the supports.

\begin{remark}[BTTB] 
Note that the matrix $\mathbf{C}(\theta)$ is \emph{Block Toeplitz with Toeplitz Blocks} (BTTB). 
This is caused by the regular distribution of the nodes and the isotropy of the covariance kernel.
 For covariance matrices with this structure, efficient low-rank approximation \cite{Halko} and sampling strategies (circulant embedding) are available. 
Note that, our ACA method does neither require this structure nor profit from it. Thus, the results discussed in the following extend to covariance matrices that are not BTTB which arise --- for instance --- when considering more complex geometries $D$ or irregular discretizations.
\end{remark}

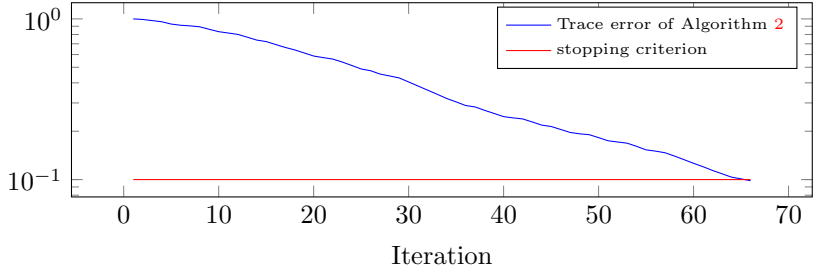
\begin{figure}
	\centering
	\begin{tikzpicture}
	\begin{semilogyaxis}[
	xlabel = Iteration, 
	width=0.9\linewidth,height=0.2\textheight,legend style={font=\tiny}, legend cell align={left}]
	\addplot[blue] table[x index = 0, y index = 1] {gaussian_convergence.dat};
	\addplot[red] table[x index = 0, y index = 2] {gaussian_convergence.dat};
	\legend{ Trace error of \Cref{alg:param-aca}, stopping criterion};
	\end{semilogyaxis}
	\end{tikzpicture}
 \caption{Test 1. Error vs. number of iterations for \Cref{alg:param-aca} applied to a discretized Gaussian covariance operator.}
\label{Figure_Time_evolution_ACA}
\end{figure}
\begin{paragraph}{Test 1}
As a first test, we set $n_0 = 512$, i.e. $n \approx 2.6\cdot 10^5$ degrees of freedom and the cardinality of $\pset_f$  to $1000$. Then, we run \Cref{alg:param-aca} using $\mathrm{tol} = 10^{-1}$ as value for the tolerance at line~\ref{step:stop} of \Cref{alg:aca}. This tolerance guarantees that a KL expansion with the same Wasserstein(2) error would represent 90\% of the random field's variance; this is a usual choice in random field discretizations (see \Cref{remark_KL_Was}).
 
The evolution of the trace error as the algorithm proceeds is shown in \Cref{Figure_Time_evolution_ACA}.
The algorithm terminates after $65$ iterations meaning that a 65-dimensional basis is sufficient to represent the parameter-dependent $512^2 \times 512^2$ operator with the desired accuracy.
Note that \Cref{alg:param-aca} always picks the smallest correlation length $\theta^*  = 0.1$ in the for-loop beginning in line~\ref{step:inner-loop}. 

The computational time of the algorithm is $336.8$ seconds. We remark that the updating strategy for the QR factorization at line~\ref{line:updateRI} of \Cref{alg:param-aca}, described in \Cref{subsec_ACA_param}, is crucial. Indeed, computing the QR factorization from scratch in each iteration increases the execution of \Cref{alg:param-aca} to $1593.3$ seconds; $80\%$ of which are spent on line~\ref{line:updateRI}.
\end{paragraph}

\begin{paragraph}{Test 2}
	In order to shed some light on the role played by $\param$,  \Cref{Figure_eigenvaluedecay} shows the sorted eigenvalues of $\mathbf{C}(\theta)$ for $\theta \in [0.1,\sqrt{2}]$ when using $n_0 = 128$ for the discretization. 
We conclude that a smaller correlation length leads to a slower eigenvalue decay. Consequently, a small value of $\param$ implies that a larger rank in a low-rank approximation of $\mathbf{C}(\param)$ is needed in order to retain the same accuracy.
\begin{figure}[hptb]
  	\centering
  	\includegraphics[scale=0.55]{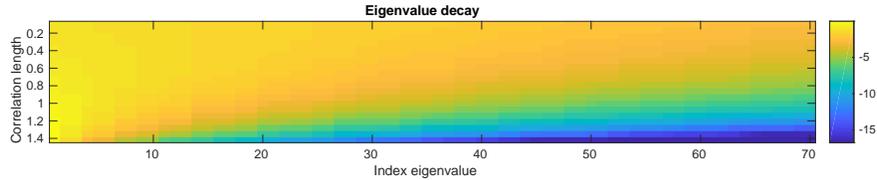}
  	\caption{Test 2. Decimal logarithms of the largest 70 eigenvalues $\mathbf{C}(\theta)$ for different values of $\theta \in [0.1,\sqrt{2}]$ and $n_0 = 128$.}
  	\label{Figure_eigenvaluedecay}
  \end{figure}
\end{paragraph}

\begin{paragraph}{Test 3} We have measured the execution time of \Cref{alg:param-aca} for the example described above with $n_0 \in \{2^3,\ldots,2^9\}$ and $|\pset_f| \in \{10, 10^2, 10^3, 10^4\}$.  The timings shown in \Cref{Figure_timings_ranks} (left) are averaged over three runs.
It can be seen that  $n$ influences the cost linearly, as predicted by the complexity estimates derived in \Cref{subsec_ACA_param}.
Moreover, we reported the rank  of the ACA approximation, i.e. the cardinality of the index set $I$ returned by \Cref{alg:param-aca}, in \Cref{Figure_timings_ranks} (right). We see that the rank converges to $65$, when refining the random field, suggesting
that the rank obtained with ACA might be discretization invariant.
Also the rank seems to not be influenced at all by the cardinality of $\pset_f$ since the algorithm always chooses the smallest correlation length. 
In this particular setting, the outcome of the algorithm does not change, as long as $\underline{\param}$ does not vary.
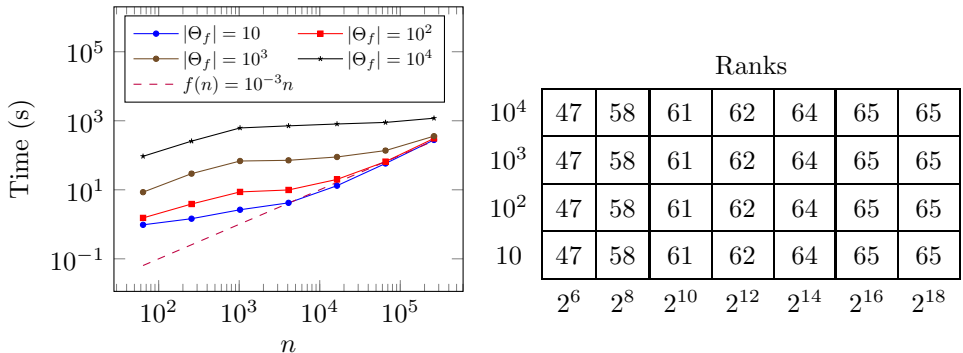
\begin{figure}
	\centering
	\begin{minipage}{.49\linewidth}
		\begin{tikzpicture}
		\begin{loglogaxis}[
		xlabel = $n$, ylabel = Time (s), 
		width=\linewidth,legend style={nodes={scale=0.7, transform shape}}, legend pos = north west,legend columns=2,  ymax = 2e6, legend cell align={left}]
		\addplot+[mark options={scale=0.5}]  table[x index = 0, y index = 1] {test_cpu_time.dat};
		\addplot+[mark options={scale=0.5}]  table[x index = 0, y index = 2] {test_cpu_time.dat};
		\addplot+[mark options={scale=0.5}]  table[x index = 0, y index = 3] {test_cpu_time.dat};
		\addplot+[mark options={scale=0.5}]  table[x index = 0, y index = 4] {test_cpu_time.dat};
		 \addplot[domain=64 : 250000, dashed, purple] {x/1000};
		\legend{$|\pset_f|=10$, $|\pset_f|=10^2$,$|\pset_f|=10^3$,$|\pset_f|=10^4$, $f(n)=10^{-3}n$};
		\end{loglogaxis}
		\end{tikzpicture}    
	\end{minipage}~\begin{minipage}{.49\linewidth}
	\renewcommand*{\arraystretch}{1.5}
		\begin{tabular}{cccccccc}
			\multicolumn{1}{l}{}        & \multicolumn{7}{c}{Ranks}                                                                                                                                                           \\ \cline{2-8} 
			\multicolumn{1}{c|}{$10^4$} & \multicolumn{1}{c|}{47} & \multicolumn{1}{c|}{58} & \multicolumn{1}{c|}{61} & \multicolumn{1}{c|}{62} & \multicolumn{1}{c|}{64} & \multicolumn{1}{c|}{65} & \multicolumn{1}{c|}{65} \\ \cline{2-8} 
			\multicolumn{1}{c|}{$10^3$} & \multicolumn{1}{c|}{47} & \multicolumn{1}{c|}{58} & \multicolumn{1}{c|}{61} & \multicolumn{1}{c|}{62} & \multicolumn{1}{c|}{64} & \multicolumn{1}{c|}{65} & \multicolumn{1}{c|}{65} \\ \cline{2-8} 
			\multicolumn{1}{c|}{$10^2$} & \multicolumn{1}{c|}{47} & \multicolumn{1}{c|}{58} & \multicolumn{1}{c|}{61} & \multicolumn{1}{c|}{62} & \multicolumn{1}{c|}{64} & \multicolumn{1}{c|}{65} & \multicolumn{1}{c|}{65} \\ \cline{2-8} 
			\multicolumn{1}{c|}{$10$}   & \multicolumn{1}{c|}{47} & \multicolumn{1}{c|}{58} & \multicolumn{1}{c|}{61} & \multicolumn{1}{c|}{62} & \multicolumn{1}{c|}{64} & \multicolumn{1}{c|}{65} & \multicolumn{1}{c|}{65} \\ \cline{2-8} 
			\multicolumn{1}{c}{}        & $2^6$                   & $2^8$                   & $2^{10}$                & $2^{12}$                & $2^{14}$                & $2^{16}$                & $2^{18}$               
		\end{tabular}

	\end{minipage}
	\caption{Test 3. Timings in seconds (left) and ranks (right) obtained from the ACA algorithm applied to different discretizations of the covariance operator. 
		The x-axes of both figures show the degrees of freedom $n = n_0^2$ of the discretized covariance operators. The y-axes show the mean elapsed time over three runs for the figure on the left and the quantity $|\pset_f|$ for the table on the right.}
	\label{Figure_timings_ranks}
\end{figure}
\end{paragraph}
 \begin{paragraph}{Test 4}
 Now, we compare the convergence history of the method with the nuclear norm of the error and the trace error of the best low-rank approximations of  $\mathbf{C}(\param)$. We remark that the convergence history of \Cref{alg:param-aca} corresponds to the trace error with respect to the approximate covariance kernel, i.e.  the quantity $\max_{\param\in\pset_f}\trace(\mathbf{C}_s(\param)- \mathbf{ C}_{sI}(\param))$. Since the approximate covariance kernel is not guaranteed to be SPSD, as true measure of the error we consider  the maximum nuclear norm of $\mathbf{C}(\param)- \mathbf{ C}_{sI}(\param)$ over $\param\in\pset_f$. Finally, since $\mathbf{ C}_{sI}(\param)$ is a rank $k=|I|$ matrix for each value of $\param$, we also compute the benchmark quantity $\max_{\param\in\pset_f}\trace(\mathbf{C}(\param)- \mathbf{C}_{(k)}(\param))$, where   $\mathbf{C}_{(k)}(\param)$ denotes the truncated SVD of length $k$ of $\mathbf{C}(\param)$.

 We run Algorithm~\ref{alg:param-aca} on an  example of small dimension: $n_0 = 20$ and $|\pset_f|=200$. Then, we plot the three quantities discussed above in Figure~\ref{Figure_error_comparison}.  We see that the trace error computed by Algorithm~\ref{alg:param-aca} and the nuclear norm of the error coincide as far as  they stay above $10^{-8}$. Then, the nuclear norm of the error stagnates because we have reached the level of inexactness which affects the approximate linear separable expansion. To conclude, we observe that
 the trace error associated with the truncated SVDs decays only slightly faster confirming the good convergence rate of the approximation returned by Algorithm~\ref{alg:param-aca}. 

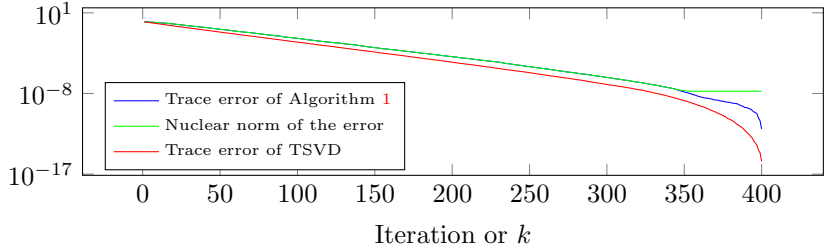
\begin{figure}
	\centering
\begin{tikzpicture}
\begin{semilogyaxis}[
xlabel = Iteration or $k$, 
width=0.9\linewidth, height=0.3\linewidth,legend style={font=\tiny}, legend pos = south west, legend cell align={left}]
\addplot[blue] table[x index = 0, y index = 1] {test_tensor_rank.dat};
\addplot[green] table[x index = 0, y index = 2] {test_tensor_rank.dat};
\addplot[red] table[x index = 0, y index = 3] {test_tensor_rank.dat};
\legend{Trace error of \Cref{alg:aca}, Nuclear norm of the error, Trace error of TSVD};
\end{semilogyaxis}
\end{tikzpicture}
\caption{Test $4$. Comparison between the convergence history of \Cref{alg:aca} (blue),  the maximum nuclear norm of $\mathbf{C}(\param)- \mathbf{ C}_{sI}(\param)$ over $\pset_f$ (green) and the best trace error associated with the truncated SVDs $\max_{\param\in\pset_f}\trace(\mathbf{C}(\param)- \mathbf{C}_{(k)}(\param))$ (red).}
\label{Figure_error_comparison}
\end{figure}
\end{paragraph}
\begin{paragraph}{Test 5}
We conclude the experiments on the Gaussian kernel by testing the sampling strategy proposed in \Cref{sec:sampling} for $n_0=512$, and $|\pset_f|=100$. More specifically, we consider the task of sampling $u$ values of $\param$  from a uniform distribution on $[0.1,\sqrt 2]$ and --- for each of those values --- generating one sample from $\mathrm{N}(0, \Cb(\param))$. Our strategy is to run \Cref{alg:param-aca} to get the approximated kernel operator $\Cb_I(\cdot)$ and then sample from $\mathrm{N}(0, \Cb_I(\param))$ as described in \Cref{sec:sampling}.  We compare this method with applying \Cref{alg:aca} directly on $\Cb(\param)$ for each sample of $\param$. The timings of the two approaches, as the number of samples increases, are reported in \Cref{Figure_sampling}. Notice that, the time consumption of the strategy based on \Cref{alg:param-aca} starts from a positive value which indicates the cost of the offline phase. We see that, while both methods have linear costs with repsect to $u$, the online phase of our strategy is faster by a factor about $4.6$ than generating one sample with \Cref{alg:aca}. In our test, about $200$ samples are enough to amortize the cost of the offline phase and to make our algorithm more convenient.
\begin{figure}
	\centering
	\begin{tikzpicture}
	\begin{axis}[
	xlabel = $u$, ylabel = Time (s),legend pos = north west,
	width=0.9\linewidth,height=0.4\linewidth,legend style={font=\tiny}, legend cell align={left}]
		\addplot[red] table[x index = 0, y index = 2] {test_sampling.dat};
	\addplot[blue] table[x index = 0, y index = 1] {test_sampling.dat};
	\legend{Sampling with \Cref{alg:aca}, Sampling with \Cref{alg:param-aca}};
	\end{axis}
	\end{tikzpicture}
	\caption{Test 5. Time consumption vs. number of samples for the sampling strategies based on \Cref{alg:aca}  and \Cref{alg:param-aca}, applied to a discretized Gaussian covariance operator.}
	\label{Figure_sampling}
\end{figure}
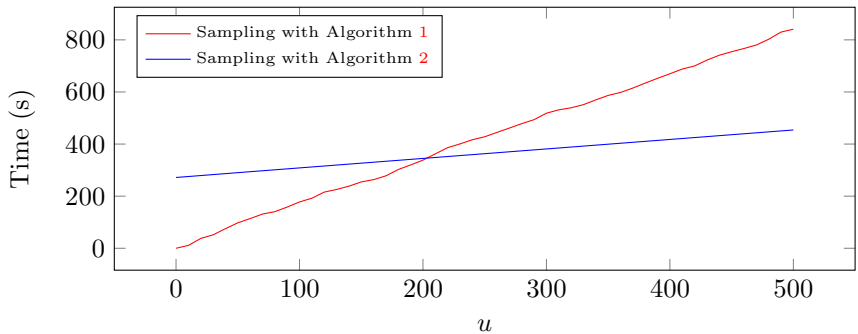
\end{paragraph}
 \subsection{Parameterized Mat\'ern covariance kernel with slower eigenvalue decay} \label{Subsec_Num_exp_MAT}
In this section, we modify the parameterized random field considered in \Cref{Subsec_Num_exp_GRF} by replacing the Gaussian covariance kernel with the Mat\'ern covariance kernel defined in \eqref{eq:matern} with $\param_1 \in [0.1,\sqrt 2]$ and $\param_2 = 2.5$. Also here, we make use of an approximate linearized kernel $\widetilde c_K$ --- computed via the function-valued ACA ---  with  cut-off value  $s = 18$ which again provides an approximation error of about $10^{-8}$. This ensures that the approximation error of $\tilde c_s$ does not affect the convergence history of \Cref{alg:param-aca} in the considered example. 

The  samples with this covariance kernel are only twice continuously differentiable, as opposed to the analytic samples from the Gaussian covariance kernel. 
Hence, we expect a larger  numerical rank of the associated covariance matrix $\mathbf{C}(\param)$ and a larger rank when employing the ACA algorithm.

 We run \Cref{alg:param-aca} with  $n = n_0^2 = 512^2$, $|\pset_f|=1000$ and $\mathrm{tol}=10^{-1}$. 
The evolution over the iterations of the trace error computed by \Cref{alg:param-aca} is reported in \Cref{Figure_Time_evolution_nu25}.
  The algorithm terminates after 106 iterations, and it takes $1015.9$ seconds, averaged over a total of three runs. It thus takes about $3$ times longer than a similar experiment with the Gaussian kernel.  
%
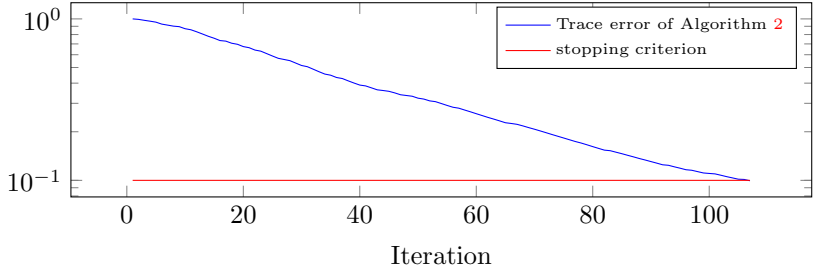
\begin{figure}
	\centering
	\begin{tikzpicture}
	\begin{semilogyaxis}[
	xlabel = Iteration, 
	width=0.9\linewidth,height=0.2\textheight,legend style={font=\tiny}, legend cell align={left}]
	\addplot[blue] table[x index = 0, y index = 1] {matern_convergence.dat};
	\addplot[red] table[x index = 0, y index = 2] {matern_convergence.dat};
	\legend{ Trace error of \Cref{alg:param-aca}, stopping criterion};
	\end{semilogyaxis}
	\end{tikzpicture}
\caption{Error vs. number of iterations for \Cref{alg:param-aca} applied to a discretized 2D Mat\'ern kernel with $\param_1\in[0.1,\sqrt 2]$ and $\param_2 = 2.5$.}
 \label{Figure_Time_evolution_nu25}
\end{figure}
As expected, the rank that we obtain is indeed larger than before. On the other hand, a 106-dimensional basis is still suitably small to efficiently represent a parameter dependent matrix of size $512^2 \times 512^2$. 

\section{Conclusions} \label{Sec_Concl}
We have proposed and analyzed a new method for the low-rank approximation of the Cholesky factor of a parameter dependent covariance operator. The approximation returned by our algorithm is certified, in the sense that it guarantees an upper bound for the error in the Wasserstein distance $W_2$ between mean-zero Gaussian measures having the true and low-rank covariance matrix, respectively.
\Cref{alg:param-aca} leads naturally to a fast sampling procedure for parameterized Gaussian random fields and potentially for the efficient treatment of certain Bayesian inverse problems \cite{Dunlop2017,LATZ2019FastSa}.
Beyond Gaussian random fields, the parameter-dependent ACA method can be applied in much more general hierarchical, kernel-based learning tasks. 
Consider, e.g., a non-linear support vector machine with parameterized (i.e. shallow),  isotropic, symmetric, and  positive semi-definite kernels that shall be trained with a large amount of data. 
Here, the ACA leads to a natural compression of the kernel matrix. 
Analysing the error introduced by ACA in other learning tasks would be an interesting topic for future work.

There is also other potential for future work. For example, while we have observed experimentally, for the examples tested, that \Cref{alg:param-aca} returns an index set $I$ of low cardinality that yields a good approximation $A_I(\bm\param)$ for all parameter values, it would be interesting to derive a prior existence results based on properties of the kernel.




\bibliographystyle{AIMS}
\bibliography{library}

\medskip
Received xxxx 20xx; revised xxxx 20xx.
\medskip

\end{document}